\documentclass[11pt]{amsart}
\usepackage{amsmath,amssymb, graphicx, amscd,latexsym,comment}
\makeatletter
\newtheorem{Theorem}{Theorem}
\newtheorem{Lemma}[Theorem]{Lemma}
\newtheorem{Corollary}[Theorem]{Corollary}
\newtheorem{Proposition}[Theorem]{Proposition}

\newtheorem{Remark}[Theorem]{Remark}

\newtheorem{Example}[Theorem]{Example}

\newtheorem{Problem}[Theorem]{Problem}
\newtheorem{Observation}[Theorem]{Observation}
\newtheorem{Assertion}[Theorem]{Assertion}

\newcommand{\eps}{\varepsilon}

\newcommand\vphi{\varphi}

\newcommand\al{\alpha}

\newcommand\si{\sigma}
\newcommand\be{\beta}

\newcommand\ga{\gamma}

\newcommand\de{\delta}

\newcommand\BC{ {\mathbb C}}

\newcommand\bfw{\mbox {\bf  w}}

\newcommand\bfz{\mbox {\bf  z}}

\newcommand\inv{^{-1}}

\def\mapright#1{\smash{\mathop{\longrightarrow}\limits^{{#1}}}}

\def\mapdown#1{\Big\downarrow\rlap{$\vcenter{\hbox{$#1$}}$}}

\def\inv{^{-1}}

\begin{document}
\title[ On the  roots of an extended Lens equation and an application
]
{On the  roots of an extended Lens equation and an application}

\author
[M. Oka ]
{Mutsuo Oka }
\address{\vtop{
\hbox{Department of Mathematics}
\hbox{Tokyo  University of Science}
\hbox{1-3 Kagurazaka, Shinjuku-ku}
\hbox{Tokyo 162-8601}
\hbox{\rm{E-mail}: {\rm oka@rs.kagu.tus.ac.jp}}
}}
\keywords {Lens equation, Mixed curves,link components}
\subjclass[2000]{14P05,14N99}

\begin{abstract}
We consider zero points of   a generalized   Lens  equation 
$L(z,\bar z)=\bar z^m-{p(z)}/{q(z)} $ and also harmonically splitting Lens type equation
$L^{hs}(z,\bar z)=r(\bar z)-p(z)/q(z)$ with $\deg\, q(z)=n,\,\deg\,p(z)<n$  whose numerator is a  mixed polynomials, say $f(z,\bar z)$,
of degree $(n+m; n,m)$.
To such a polynomial, we associate a strongly mixed weighted homogeneous polynomial $F(\bfz,\bar \bfz)$
of  two variables and we show  the topology of Milnor fibration of $F$ is described by the number of roots of $f(z,\bar z)=0$.

\end{abstract}
\maketitle

\maketitle

\section{Introduction}
Consider  a mixed polynomial of one  variable $f(z,\bar z)=\sum_{\nu,\mu}a_{\nu,\mu}z^\nu{\bar z}^\mu$.
 We denote the set of roots of $f$ by
$V(f)$. Assume that $z=\alpha$ is an isolated zero of $f=0$. 
Put $f(z,\bar z)=g(x,y)+ih(x,y)$ with $z=x+iy$. 
A root $\alpha$ is called {\em  simple} if the Jacobian $J(g,h)$  is 
not vanishing at $z=\al$. We call $\al$ an orientation preserving or positive  (respectively   orientation reversing, or negative), if the Jacobian $J(g,h)$ is positive (resp. negative) at $z=\al$.

There are two basic questions.
\begin{enumerate}
\item 
Determine the number of roots with sign.
\item Determine the number of roots without  sign.
\end{enumerate}
\subsection{Number of roots with sign}
Let $C$ be a mixed projective curve of polar degree $d$ defined by a strongly  mixed homogeneous 
polynomial  $F(\bfz,\bar\bfz),\,\bfz=(z_1,z_2,z_3)$ of radial degree $d_r=d+2s$  and let $L=\{ z_3=0\}$ be a line
in $\mathbb P^2$.  We assume that $L$ intersects $C$ transversely.
\begin{Proposition} (Theorem 4.1, \cite{MC})
Then the fundamental class $[C]$ is mapped to $d[\mathbb P^1]$ and 
thus the intersection  number $[C]\cdot[L]$ is given by $d$. This is also 
given by  the number of the roots of $F(z_1,z_2,0)=0$ in $\mathbb P^1$ counted with sign.
\end{Proposition}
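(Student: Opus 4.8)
\medskip
\noindent\emph{Strategy of proof.}
The plan is to turn the three assertions into one signed count of zeros and then evaluate that count by a degree argument. Since $H_2(\mathbb{P}^2;\mathbb{Z})$ is freely generated by $[\mathbb{P}^1]$, write $[C]=\delta\,[\mathbb{P}^1]$ with $\delta\in\mathbb{Z}$; as the self-intersection of a line in $\mathbb{P}^2$ is $1$, we get $[C]\cdot[L]=\delta$, so it is enough to prove $\delta=d$. Because $L$ meets $C$ transversally, $[C]\cdot[L]=\sum_{p\in C\cap L}\varepsilon_p$, the sum of the local intersection numbers $\varepsilon_p=\pm1$ taken with respect to the complex orientation of $L$ and the natural orientation of the mixed curve $C$. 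Finally $C\cap L=V(F(z_1,z_2,0))$, and $F(z_1,z_2,0)$ is a strongly mixed homogeneous polynomial in the two variables $(z_1,z_2)$ of the same polar degree $d$ and radial degree $d+2s$; transversality forces this zero set to be finite and its points to be simple roots. Thus the proposition reduces to: (i) $\varepsilon_p=\sign\, J(g,h)(p)$, i.e.\ $\varepsilon_p$ is the sign of the root $p$; and (ii) $\sum_p\varepsilon_p=d$.

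For (i) I would work in an affine chart $w=z_1/z_2$ on $L$ together with the compatible chart $(w,z_3/z_2)$ on $\mathbb{P}^2$; near a root $p$ one has $C=V(\widetilde f)$ for a mixed function $\widetilde f$ of $(w,z_3/z_2)$, and $C\cap L$ is the zero set of $f(w,\bar w):=\widetilde f|_{z_3/z_2=0}$, which is exactly the restriction under consideration. The natural orientation of $C$ at a smooth point is, by definition, the one for which the real differential $d\widetilde f$ identifies the normal bundle with $\mathbb{C}$ in an orientation-preserving way. In Wirtinger coordinates at the simple root $p$ the restriction of $d\widetilde f|_p$ to the complex line spanned by $\partial_w$ is the real-linear isomorphism $\delta w\mapsto \widetilde f_w\,\delta w+\widetilde f_{\bar w}\,\overline{\delta w}$, whose determinant is $J(g,h)(p)$. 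A short orientation computation — comparing $[T_pC]\wedge[T_pL]$ with $[T_p\mathbb{P}^2]$ through this identification — then yields $\varepsilon_p=\sign\, J(g,h)(p)$.

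For (ii) I would first change coordinates by a linear automorphism of $\mathbb{C}^3$ preserving $\{z_3=0\}$ (which preserves strong mixed homogeneity, the curve, the line, and all orientations) so that $[1:0]\notin C\cap L$. In the chart $w$ the polynomial $F(z_1,z_2,0)$ then becomes a mixed polynomial $f(w,\bar w)$; the polar/radial bidegree constraints leave $w^{d+s}\bar w^{\,s}=|w|^{2s}w^{d}$ as the only monomial of maximal total degree in $w$ and $\bar w$, and its coefficient $c$ is nonzero precisely because $[1:0]\notin C\cap L$. Hence on a circle $|w|=R$ with $R$ large, $f/|f|$ is homotopic to $w\mapsto (c/|c|)(w/|w|)^{d}$, a map $S^1\to S^1$ of degree $d$. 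By the argument principle for smooth maps, $\sum_{p\in V(f)}\sign(\det Df(p))$ equals $\deg(f/|f|\colon S^1_R\to S^1)=d$, and $\det Df=J(g,h)$. Combining with (i) gives $[C]\cdot[L]=\sum_p\varepsilon_p=d$, hence $\delta=d$ and $[C]=d\,[\mathbb{P}^1]$.

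The step I expect to be the genuine obstacle is (i): matching the local-analytic notion ``sign of a root'' with the topological local intersection number requires pinning down, and then using carefully, the orientation convention for the mixed curve $C$, which has no complex structure of its own. One also needs, before anything else, to know that under the standing non-degeneracy hypotheses $C$ is a closed oriented (pseudo)manifold carrying a fundamental class; granting that, the asymptotic computation in (ii) is routine once the leading monomial $|w|^{2s}w^{d}$ is identified. An alternative route to $[C]=d\,[\mathbb{P}^1]$ alone would be to deform $F$ through strongly mixed homogeneous polynomials of polar degree $d$ to $h\cdot(\sum|z_i|^{2})^{s}$ with $h$ a homogeneous polynomial of degree $d$, whose zero locus is an ordinary plane curve of degree $d$; but this does not by itself yield the root-counting statement, so the degree argument above is preferable.
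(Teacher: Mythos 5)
Your proof is correct and is essentially the approach the paper has in mind: the statement is quoted from Theorem 4.1 of \cite{MC} without its proof, but the discussion immediately following it (pass to the affine chart, note $f(z)=F(z,1,0)=c\,z^{d+s}\bar z^{s}+(\text{lower terms})$, and apply the signed-count formula of \cite{MixIntersection}) is exactly your step (ii), with $\be(f)=(d+s)-s=d$ since the top mixed-degree part is the single monomial $c\,z^{d+s}\bar z^{s}$. Your step (i), identifying the local intersection number of $C$ and $L$ at a transverse point with $\sign J(g,h)$ of the restricted mixed function via the coorientation of $C$ induced by $F$ (determinant $|f_w|^2-|f_{\bar w}|^2$), is the orientation lemma underlying that intersection theory, and the computation you sketch for it is the standard and correct one.
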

We assume that the point at infinity $z_2=0$ is not in the intersection $C\cap L$ and use the affine coordinate
$z=z_1/z_2$. Then $C\cap L$ is described by the roots of the mixed polynomial
$f(z):=F(z,1,0)$ which is written as
\[
f(z)=z^{d+s}\bar z^s+\text{(lower terms)}=0
\]
with respect to the mixed degree.
The second term is a linear combination
of monomials $z^a\bar z^b$ with $a+b<d+2s, a\le d,\,b\le s$.

Generic  mixed polynomials do not come from 
 mixed projective curves through  a holomorphic line section as above. 
The following is useful to compute the number of  zeros with sign of such polynomials.
Let $f(z,\bar z)$ be a given mixed polynomial of one variable.
 we consider the filtration by the degree:
\[\begin{split}
 f(z,\bar z)&=f_{ d}(z,\bar z)+f_{ d-1}(z,\bar z)+\cdots+
f_{0}(z,\bar z)
\end{split}
\]
with $d( k)< d$. 
Here
  $
  f_\ell(z,\bar z):=\sum_{\nu+\mu=\ell}c_{\nu,\mu}z^\nu\bar z^\mu.
 $
Note that we have  a unique 
 factorization of $f_{d}$  as follows.
\begin{eqnarray*}
 f_{ d}(z,\bar z)&=c z^{ p}\bar z^{ q}\prod_{j=1}^{s}(z+\ga_j \bar
 z)^{\nu_j},\quad\\
&p+q+\sum_{j=1}^{s}\nu_j=d,\,c\in \BC^*.
\end{eqnarray*}
where $\ga_1,\dots, \ga_{s}$
are mutually distinct non-zero complex numbers.
We say that $f(z,\bar z)$ is {\em admissible at infinity}
 if
$|\ga_j|\ne 1$ for $j=1,\dots, s$ .
For non-zero complex number $\xi$, we put 
\[\eps(\xi)=\begin{cases}
1\quad &|\xi|<1\\
-1\quad &|\xi|>1\end{cases}
\]
and we consider the following  integer:
\[
\be(f):= p- q+\sum_{j=1}^{s} \eps(\ga_j)\nu_j,
\]
The following equality holds.
\begin{Theorem} (\cite{MixIntersection})\label{main}
Assume that $f(z,\bar z)$ is an admissible mixed polynomial
 at infinity.
 Then the total number of roots with sign is equal to 
$\be(f)$.
\end{Theorem}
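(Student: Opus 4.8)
The plan is to reinterpret ``the total number of roots with sign'' as a winding number around a large circle and then to read off that winding number from the top--degree part $f_d$. First I would observe that admissibility at infinity forces $f$ to have no zeros outside a large disc. Indeed, writing $z=Re^{i\theta}$ one has $z+\ga_j\bar z=Re^{-i\theta}(e^{2i\theta}+\ga_j)$, which never vanishes when $|\ga_j|\neq 1$; hence $f_d(z,\bar z)=cz^{p}\bar z^{q}\prod_{j=1}^{s}(z+\ga_j\bar z)^{\nu_j}$ has no zeros on any circle $|z|=R>0$, and by radial homogeneity $|f_d|\ge m_0R^{d}$ there, where $m_0>0$ is the minimum of $|f_d|$ on the unit circle. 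Since $f-f_d=\sum_{\ell<d}f_\ell$ gives $|f-f_d|\le CR^{d-1}$ on $|z|=R$ for $R\ge 1$, for all $R>C/m_0$ the function $f$ is non-vanishing on and outside $|z|=R$; in particular $V(f)$ is contained in a compact disc, so (the roots being isolated) $V(f)$ is finite.

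Next I would invoke the argument principle in the real--analytic setting (equivalently Poincar\'e--Hopf for the vector field $(g,h)$ on the disc $|z|\le R$): the sum over all $\al\in V(f)$ of the local indices $\mathrm{ind}_\al(f)$ equals $\deg\bigl(f/|f|\colon\{|z|=R\}\to S^1\bigr)$ for $R$ large, and at a simple root the local index is exactly $\sign J(g,h)(\al)$, the sign attached to $\al$ in the statement. Thus the total number of roots with sign equals this boundary degree. The straight--line homotopy $f_s:=f_d+s(f-f_d)$, $s\in[0,1]$, is non-vanishing on $|z|=R$ by the estimate above, so $\deg(f/|f|;|z|=R)=\deg(f_d/|f_d|;|z|=R)$.

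Finally I would compute $\deg(f_d/|f_d|;|z|=R)$ using multiplicativity of winding numbers over the factorization $f_d=cz^{p}\bar z^{q}\prod_{j=1}^{s}(z+\ga_j\bar z)^{\nu_j}$: the constant $c$ contributes $0$, $z^{p}$ contributes $p$, $\bar z^{q}$ contributes $-q$, and each factor $z+\ga_j\bar z=Re^{-i\theta}(e^{2i\theta}+\ga_j)$ contributes $-1$ (from $Re^{-i\theta}$) plus the winding number of $\theta\mapsto e^{2i\theta}+\ga_j$ about $0$, which is $2$ when $|\ga_j|<1$ and $0$ when $|\ga_j|>1$; hence each such factor contributes $\eps(\ga_j)$. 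Summing, $\deg(f_d/|f_d|;|z|=R)=p-q+\sum_{j=1}^{s}\eps(\ga_j)\nu_j=\be(f)$, which is the desired equality.

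The estimates involved (the bound $|f-f_d|\le CR^{d-1}$, positivity of $m_0$, multiplicativity of winding number) are routine; the conceptual heart is the identification of ``roots with sign'' with the boundary degree, together with the realization that admissibility at infinity is precisely the hypothesis that keeps $f_d$ away from $0$ on large circles, which is exactly what makes the homotopy and the winding--number count legitimate. The step requiring the most care is the passage from the sum of local indices to the boundary degree when $f$ has non-simple but isolated zeros, and verifying that the theorem's hypotheses indeed force $V(f)$ to be zero--dimensional so that this count is meaningful.
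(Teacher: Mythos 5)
This paper states Theorem \ref{main} without proof, citing \cite{MixIntersection}; your argument — identify the signed root count with the boundary degree of $f/|f|$ on a large circle, homotope $f$ to its top mixed-degree part $f_d$ there (legitimate precisely because admissibility keeps $f_d$ nonvanishing on circles and $|f_d|\ge m_0R^d$ dominates the lower-degree terms), and compute the degree factorwise with each $z+\ga_j\bar z$ contributing $\eps(\ga_j)$, $z^p\bar z^q$ contributing $p-q$ — is correct and is essentially the proof of the cited reference. The one point to fix is your closing claim: admissibility at infinity does not force $V(f)$ to be zero-dimensional (e.g.\ $z\bar z-1$ is admissible and its zero set is a circle), so isolatedness of the roots must be taken as an implicit hypothesis of the statement (as the Remark on multiplicities already presumes), under which your index/degree argument goes through verbatim.
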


\begin{Remark}
Here if $\al$ is a non-simple root, we count the number with multiplicity.
The multiplicity is dfined by the local rotation number at $\al$ of the normalized Gauss mapping $S_\eps(\al)\to S^1$, $z\mapsto f(z)/|f(z)|$.
\end{Remark}
\subsection{Number of roots ignoring the sign}
In this paper, we are interested in the total number  of $V(f)$ which we denote by  $\rho(f)$, the cardinality of
$\sharp V(f)$ for  particular classes  of mixed polynomials ignoring the sign.
The notion of the multiplicity is not well defined for a root without sign. Thus we assume that roots are all simple.
The problem is that $\rho(f)$ is not described by the highest degree part $f_d$, which was the case for
the number of roots with sign  $\be(f)$.
We will an example of mixed polynomial below $\rho(f)=n^2$.
Another example is known by Wilmshurst  (\cite{Wilmshurst}).
\begin{Example}
Let us consider the Chebycheff polynomial $T_n(x)$.
The graph has two critical values 1 and -1 and the roots of $T_n(x)=0$ is in the interval
$(-1,1)$.
Consider  a polynomial
\[
F(x,y)=(y-T_n(x)+i(x-aT_n(by)),\quad a,b \gg 1.
\]
By the assumption,  $F=0$
has $n^2$ roos in $(-1,1)\times (-1,1)$.
Consider $F$ as a mixed polynomial by substituting $x,y$ by
$x=(z+\bar z)/2,\,y=-i(z-\bar z)/2$. 
This example gives an extreme case for which the possible complex roots (by Bezout theorem)
of $\Re F=\Im F=0$  are all  real roots.
\end{Example}

\begin{figure}[htb,H]
\setlength{\unitlength}{1bp}
\includegraphics[width=7cm,height=6cm]{./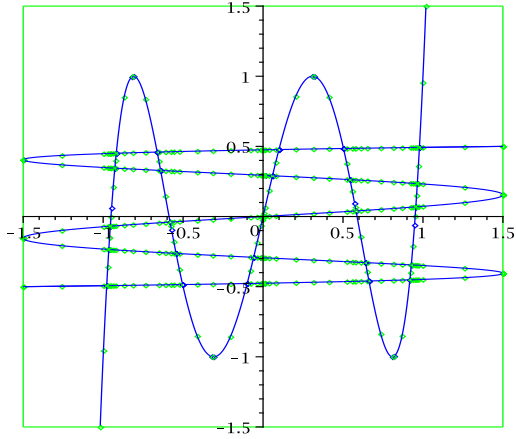}
\vspace{1cm}
\caption{Roots of $F(x,y)=0, n=5$}
\end{figure}

The above example shows implicitly that the behavior of the number of roots without sign  behaves very violently if we do not assume any assumption on $f$.

Consider  a mixed polynomial of one  variable $f(z,\bar z)=\sum_{\nu,\mu}a_{\nu,\mu}z^\nu{\bar z}^\mu$.  Put
\[\begin {split}
&\deg_z\,f:=\max\{\nu\,|\, a_{\nu,\mu}\ne 0\}\\
&\deg_{\bar z}\,f:=\max\{\mu\,|\,a_{\nu,\mu}\ne 0\}\\
&\deg\,f:=\max\{\mu+\nu\,|\,  a_{\nu,\mu}\ne 0\}
\end{split}
\]
We call $\deg_z\,f,\,   \deg_{\bar z}\,f,\, \deg\,f$  {\em the holomorphic degree} , {\em  the anti-holomorphic degree}
and {\em the mixed degree} of $f$
 respectively. 
We  consider the following subclasses  of mixed polynomials:
\begin{eqnarray*}
&M(n+m;n,m):=&\{f(z,\bar z)\,|\, \deg\,f=n+m,\,\deg_z\,f=n,\,\deg_{\bar z}\,f=m\},\\
&L(n+m;n,m):=&\{\bar z^m q(z)-p(z)\,|\, \deg_z q(z)=n,\,\,\deg_z p(z)\le n\},\\
&L^{hs}(n+m;n,m):=&
\{r(\bar z) q(z)-p(z)\,| \deg_{\bar z} r(\bar z)=m,\,\\
&&\qquad\qquad\qquad \deg_z, q(z)=n,\,\,\deg_z p(z)\le n\}.
\end{eqnarray*}
where $p(z),q(z)\in \mathbb C[z],r(\bar z)\in \mathbb C[\bar z]$. We have  canonical inclusions:
\[
L(n+m;n,m)\,\,\subset L^{hs}(n+m;n,m)\,\,\subset M(n+m;n,m).
\]
The class $L(n+m;n,m), L^{hs}(n+m;n,m)$ come from harmonic functions 
\[
\bar z^m-\frac{p(z)}{q(z)},\,\, r(\bar z)-\frac{p(z)}{q(z)}
\]
as their  numerators. Especially $L(n+1;n,1)$ corresponds to the lens equation.
We call $\bar z^m-\frac{p(z)}{q(z)}=0$ 
and  $ r(\bar z)-\frac{p(z)}{q(z)}=0$ 
{\em a generalized lens equation}
and  {\em a harmonically splitting lens type equation} respectively.
The corresponding numerators are called a generalized lens polynomial and
 a harmonically splitting lens type polynomial respectively.
The  polynomials which attracted us in this paper are these classes. 
We thank to A. Galligo 
for sending us  their paper  where we learned  this problem ({\cite{Elkadi-Galligo}).
\subsection{Lens equation 
}
The following equation is known as the lens equation.
\begin{eqnarray}\label{lens-rational1}
L(z,\bar z)=\bar z-\sum_{i=1}^n  \dfrac{\si_i}{z-\al_i}=0,\quad \si_i,\al_i\in \mathbb C^*.
\end{eqnarray}
We identify the left side rational function  with the mixed polynomial given by its numerator
\[
L(z,\bar z)\prod_{i=1}^n(z-\al_i)\in M(n+1;n,1).
\]
throughout this paper. The real and imaginary part of this polynomial
are polynomials of $x,y$ of degree $n+1$.
Unlike the previous example, $ \rho(f)$ is much more smaller than $(n+1)^2$.
This type of  equation  is studied for astorophisists. 
For more explanation from astrophiscal viewpoint, see for example
Petters-Werner \cite{Petters-Werner}. The lens equation can be witten as 
\begin{eqnarray}\label{lens-rational2}
L(z,\bar z)&:=&\bar z- \vphi(z),\quad \vphi(z)=\dfrac{p(z)}{q(z)}=0,\\
&&\quad \deg\, p(z)\le n,\,\deg\, q(z)=n.\notag
\end{eqnarray}
A slightly  simpler equation 
 is 
\begin{eqnarray}\label{lens-polynomial}
L'(z,\bar z):=\bar z-p(z),\quad \deg_z\,p=n.
\end{eqnarray}
Both equations are studied using complex dinamics.
Consider the function $r:\mathbb P^1\to\mathbb P^1$ defined 
$r(z)=\overline{  \vphi(\overline{\vphi(z)} )}$. It is easy to see that $r$ is a rational mapping
of degree $n^2$.
Observe that $z$ is a root of $L(z)=0$ , then $z$ is a fixed point of $r(z)$, that is $z=r(z)$.
%
It is known that 
\begin{Proposition} \label{bound}
The number of zeros $\rho(L')$  of $L'$,  is bounded by  $3n-2$  
 Khavinson-\'Swiat\c{e}k \cite{Khavinson-Swiatek} and the  number of zeros
 $\rho(L)$ of $L$ is bounded by $5n-5$ by Khavinson-Neumann  \cite{Khavinson-Neumann}. 
\end{Proposition}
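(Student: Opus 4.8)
The plan is to reduce both inequalities to a single estimate on the number of attracting fixed points of an auxiliary anti-holomorphic self-map, and to obtain that estimate from Fatou's theory of rational dynamics. Write $r(z)=p(z)$ in the first case and $r(z)=p(z)/q(z)$ in the second, so that in both cases the lens equation reads $\bar z=r(z)$; a point is a zero exactly when it is a fixed point of the anti-holomorphic map $G(z):=\overline{r(z)}$ on $\widehat{\mathbb C}$ (viewed projectively in the rational case). A standard perturbation argument reduces us to the generic situation in which every zero $z_0$ is simple and lies away from the poles of $r$ and from $\infty$; there the Jacobian of the lens map at $z_0$ is $|r'(z_0)|^2-1\ne 0$, so $z_0$ is \emph{sense-preserving} ($|r'(z_0)|>1$) or \emph{sense-reversing} ($|r'(z_0)|<1$). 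Let $N_+,N_-$ be the respective counts, so $\rho=N_++N_-$.

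First I would record the index identity coming from the argument principle for harmonic mappings. For $L'=\bar z-p(z)$ with $n=\deg p\ge 2$, the term $-p(z)$ dominates on $\{|z|=R\}$, the change of argument of $L'$ around that circle is $2\pi n$, and this equals $2\pi(N_+-N_-)$; hence $N_+-N_-=n$ and $\rho=2N_-+n$. For $L=\bar z-p(z)/q(z)$ one applies the same principle on a large disc punctured at the $n$ (simple) poles of $r$: the change of argument along $\{|z|=R\}$ is $-2\pi$ and along a small loop about each pole is $-2\pi$, so $N_+-N_-=-1-n(-1)=n-1$ and $\rho=2N_-+(n-1)$. Thus it remains to prove
\[
N_-\le n-1\quad\text{(polynomial case)},\qquad N_-\le 2n-2\quad\text{(rational case)}.
\]

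The core of the proof — and the step I expect to be the \emph{main obstacle} — is this dynamical bound. A sense-reversing zero $z_0$ is a fixed point of $G$ at which the holomorphic map $Q:=G\circ G$ (a rational map of degree $n^2\ge 4$) has multiplier $|r'(z_0)|^2<1$, so $z_0$ is an attracting fixed point of $Q$. Distinct sense-reversing zeros give attracting fixed points of $Q$ with pairwise disjoint basins of attraction. Since $G$ commutes with $Q$ and fixes $z_0$, it maps the basin of $z_0$ into itself. By Fatou's theorem this basin contains a critical point $c$ of $Q$; since the critical points of $Q=G\circ G$ are the critical points of $G$ together with their $G$-preimages, and $G$ preserves the basin, the basin also contains a critical point of $G$ (namely $c$ or $G(c)$) — equivalently a zero of $p'$ in the polynomial case, or a critical point of the degree-$n$ rational map $r$ in the rational case. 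Because the basins are disjoint, these critical points are distinct; as $p'$ has $n-1$ zeros and $r$ has $2n-2$ critical points (Riemann--Hurwitz), we obtain $N_-\le n-1$, respectively $N_-\le 2n-2$.

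Combining the two steps gives $\rho(L')=2N_-+n\le 2(n-1)+n=3n-2$ and $\rho(L)=2N_-+(n-1)\le 2(2n-2)+(n-1)=5n-5$ (for $n=1$, where $Q$ has degree $1$ and the dynamical estimate is vacuous, the equations are solved by hand). The points needing the most care are: (i) the reduction to simple zeros, where one must make sure that passing to a generic member of the relevant coefficient family does not decrease the count of distinct zeros; and (ii) the rigorous form of the ``the basin contains a critical point of $G$'' step, including the bookkeeping at poles and at $\infty$ and the check that the relevant fixed points of $Q$ are genuinely zeros of the lens equation and not spurious $2$-periodic points of $G$.
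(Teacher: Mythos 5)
The paper does not actually prove this proposition: it is quoted from Khavinson--\'Swiat\c{e}k and Khavinson--Neumann, and the paper only records the key ingredient, namely the Fatou-type Lemma that an attracting or rationally neutral fixed point of a rational map attracts a critical point. Your reconstruction follows exactly the route of those cited proofs (harmonic argument principle to compute $N_+-N_-$, then Fatou's theorem applied to the holomorphic second iterate $Q=G\circ G$ to bound $N_-$ by the number of critical points of $p$, resp.\ of $r$), so it is the same approach the paper alludes to, and the central dynamical step — the basin is $G$-invariant, critical points of $Q$ are critical points of $G$ or their $G$-preimages, disjoint basins give distinct critical points — is carried out correctly. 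The one step I would not call ``standard'' is the opening reduction to simple zeros by perturbation: a degenerate zero of $\bar z-r(z)$ can have local index $0$ and simply disappear under a small perturbation, so genericity does not obviously preserve the total count. The cited papers avoid this: they bound the sense-reversing \emph{and} singular zeros together by the critical-point count, using the rationally neutral case of the Fatou lemma (the multiplier of $Q$ at a fixed point of $G$ is $|r'(z_0)|^2\ge 0$, so a singular zero gives a parabolic point with multiplier exactly $1$ — this is precisely why the paper's quoted Lemma includes ``rationally neutral''), and they perturb only to control the sense-preserving zeros, which have index $+1$ and persist. Under this paper's standing assumption that all roots are simple the caveat is harmless and your argument goes through; also note the poles of $r$ need not be simple, but the winding contributions still sum to $-n$ since $\deg q=n$, and in the polynomial case one should check that the critical point supplied by Fatou's theorem can be taken finite (the basin of a finite attracting fixed point of the polynomial $Q$ is disjoint from that of $\infty$).
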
 
  Bleher-Homma-Ji-Roeder has  determined the exact  range of $\rho(L)$:
\begin{Theorem}\label{Lens-range}(Theorem 1.2,\cite{Bleher})Suppose that the lens equation has only simple solutions. Then the set of   possible numbers of 
solutions is equal to
\[
\{n-1+2k\,|\, 0\le k\le 2n-2\}=
\{n-1,n+1,\cdots, 5n-7,5n-5\}.
\]
\end{Theorem}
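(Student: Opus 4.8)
The plan is to split Theorem \ref{Lens-range} into three parts: the congruence $\rho(L)\equiv n-1\pmod 2$ together with the lower bound $\rho(L)\ge n-1$; the upper bound $\rho(L)\le 5n-5$; and the realization of every intermediate value $n-1+2k$.

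For the first part I would apply Theorem \ref{main} to the lens polynomial $f(z,\bar z)=\bar z\,q(z)-p(z)\in M(n+1;n,1)$. Since $\deg p\le n$, its top mixed-degree part is $f_{n+1}=c\,z^n\bar z$, where $c$ is the leading coefficient of $q$; thus in the factorization of the leading form one has $p=n$, $q=1$ and $s=0$, so there are no factors $z+\ga_j\bar z$ and the admissibility condition at infinity is vacuous. Hence $f$ is admissible at infinity and $\be(f)=p-q=n-1$. Because all roots are assumed simple, $\be(f)=N_+-N_-$, where $N_+$ (resp.\ $N_-$) is the number of positive (resp.\ negative) roots, so $\rho(L)=N_++N_-=(n-1)+2N_-\ge n-1$ and $\rho(L)\equiv n-1\pmod 2$.

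The second part is immediate: the upper bound $\rho(L)\le 5n-5$ is Khavinson--Neumann, quoted in Proposition \ref{bound}. Combined with the first part this forces $N_-=\tfrac12\bigl(\rho(L)-(n-1)\bigr)\le 2n-2$, hence $\rho(L)=n-1+2k$ with $0\le k\le 2n-2$; so the set of possible values is contained in the set claimed in the theorem.

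The third part, realization, is the substantial one. I would first exhibit two endpoint examples: a lens equation with exactly $n-1$ simple roots (easily produced, e.g.\ by a small generic perturbation of a rational map with a single pole cluster such as $\vphi(z)=z/z^{n}$), and a Rhie-type configuration --- a symmetric arrangement of $n$ equal \emph{masses} together with one small extra mass --- realizing the maximum $5n-5$. Then I would connect them: the parameter space of lens equations (pairs $(p,q)$ with $\deg q=n$, modulo scaling) is connected, and along a generic real-analytic path from one endpoint to the other the discriminant locus, where $f$ acquires a non-simple root, is met transversally at isolated parameter values, at each of which exactly one pair of oppositely signed roots is created or annihilated, so $\rho$ changes by precisely $\pm 2$; an intermediate-value argument along this path then yields all values of the correct parity between $n-1$ and $5n-5$. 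The hard part is exactly this last control: showing the jumps are always by $\pm 2$ and that no value is skipped, i.e.\ understanding the discriminant hypersurface of the family $\{\bar z\,q(z)-p(z)\}$ well enough to rule out simultaneous multiple collisions along the chosen path. The cleanest way to secure it is to replace the abstract path by explicit tunable families --- for instance nested, rescaled Rhie configurations whose inner-cluster size is a free parameter --- in which the solution count can be set to each target value, and then to verify by a genericity computation that all solutions in these families are simple; this explicit construction is the technical heart of the realization step.
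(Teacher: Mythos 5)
First, a point of order: the paper does not prove Theorem~\ref{Lens-range} at all --- it is quoted verbatim as Theorem~1.2 of Bleher--Homma--Ji--Roeder \cite{Bleher}, so there is no in-paper proof to compare against. Judged on its own terms, your proposal correctly handles the ``containment'' half: the top mixed-degree part of $\bar z\,q(z)-p(z)$ is indeed $c\,z^n\bar z$, so admissibility is automatic, Theorem~\ref{main} gives $\be(f)=n-1$, and together with the Khavinson--Neumann bound of Proposition~\ref{bound} this yields $\rho(L)=n-1+2k$ with $0\le k\le 2n-2$ (modulo the small remark that you should assume $\gcd(p,q)=1$ so that zeros of the numerator are actually solutions of the lens equation and not poles).

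The genuine gap is the realization half, which is the actual content of the cited theorem, and your argument for it is a plan rather than a proof. The path-plus-discriminant scheme needs several things you do not supply: (i) a proof that along a suitably chosen path the count changes only at isolated parameters and only by $\pm 2$ --- ruling out simultaneous collisions is not a routine genericity statement in this real-algebraic family, and the parity argument alone does not exclude jumps by $4,6,\dots$; (ii) control of the two other ways the count can jump without a simple root collision, namely roots colliding with zeros of $q$ (where numerator and denominator vanish together, so the point stops being a solution of the lens equation) and, if the leading coefficient of $q$ is allowed to degenerate along the path, roots escaping to infinity; and (iii) the explicit tunable families that you yourself identify as ``the technical heart'' but do not construct. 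In \cite{Bleher} exactly this step requires a delicate construction (perturbations of Rhie-type symmetric configurations with a controllable inner parameter, plus a proof of simplicity of all solutions), and an intermediate-value argument over an abstract connected parameter space is not a substitute for it. Note also that the paper's own Theorem~\ref{main1} (with $m=1$) supplies the maximal endpoint $5n-5$, and $\bar z z^n-1$ supplies $n-1$, but neither the paper nor your sketch establishes the intermediate values; as written, your proof establishes only the inclusion of the possible counts in $\{n-1,n+1,\dots,5n-5\}$, not the claimed equality.
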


 The estimation in Proposition \ref{bound} are optimal.
Rhie gave an explicit example of $f$ which satisfies $ \rho(f)=5n-5$ ( See Rhie \cite{Rhie},
 Bleher-Homma-Ji-Roeder  \cite{Bleher}, and also Theorem \ref{main1} below). Thus the inequality $\rho(f)\le 5(n-1)$ is optimal.
The minimum of $\rho$ is  $n-1$ and it can be obtained for example by $\bar z z^n-1$.

In the proof of Proposition \ref{bound}, the following principle in complex dinamics plays a key role.
\begin{Lemma}
Let $r$ be an rational function on $\mathbb P^1$. If $z_0$ is an  attracting or rationbally neutral fixed point, then $z_0$ attracts some critical point of $r$.
\end{Lemma}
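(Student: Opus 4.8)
The plan is to prove this by the standard argument for the Fatou--Julia theorem on critical points, whose whole content is a dichotomy: either the immediate basin of $z_0$ meets the critical set of $r$, or it admits a nonconstant holomorphic map from $\mathbb{C}$ --- and the latter is impossible because the basin is a hyperbolic Riemann surface. I will assume $\deg r \ge 2$ throughout (a M\"obius transformation has no critical points, so the statement is vacuous otherwise; for $\deg r\ge 2$ the Julia set $J(r)$ is infinite, so every Fatou component omits at least three points of $\mathbb{P}^1$ and is a hyperbolic Riemann surface; and in our application $\deg r = n^2$). Write $\lambda = r'(z_0)$. First I would dispose of two easy reductions: if $\lambda = 0$ then $z_0$ is itself a critical point lying in its own basin; and if $\lambda$ is a primitive $q$-th root of unity I would pass to $r^{\circ q}$, for which $z_0$ becomes a parabolic fixed point of multiplier $1$ --- a critical point of $r^{\circ q}$ attracted to $z_0$ has, by the chain rule, a forward $r$-iterate that is a critical point of $r$, still attracted to $z_0$ because $z_0$ is $r$-fixed. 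It then remains to treat $0 < |\lambda| < 1$ and $\lambda = 1$.

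In either remaining case, let $A^*$ be the relevant immediate basin (the Fatou component containing $z_0$ when $0<|\lambda|<1$; a Fatou component containing an attracting petal at $z_0$ when $\lambda = 1$), and suppose for contradiction that $A^*$ contains no critical point of $r$. The key step --- and the one I expect to be the main obstacle to write cleanly --- is to deduce that $r\colon A^* \to A^*$ is a covering map. For this I would check that $A^*$ is a connected component of $r^{-1}(A^*)$ (any such component lies in the full basin of $z_0$ and, containing a point of $A^*$, must coincide with $A^*$, while $r(A^*)\subseteq A^*$ gives the reverse containment), that $r(A^*)=A^*$ (the image is open, and closed in $A^*$ by properness, and nonempty), and that $r\colon A^*\to A^*$ is proper --- here one uses that $\partial A^*\subseteq J(r)$ and $r(J(r))=J(r)$, so points of $\partial A^*$ go to points of $\partial A^*$ and orbits cannot escape a compact subset of $A^*$ to the boundary. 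A proper, open, unramified holomorphic map between Riemann surfaces is a finite covering map.

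Next I would feed in the local normal form at $z_0$. For $0<|\lambda|<1$ this is a Koenigs linearizer $\psi_0\colon\mathbb{D}_\varepsilon\to A^*$, a biholomorphism onto its image with $\psi_0(0)=z_0$ and $r\circ\psi_0(u)=\psi_0(\lambda u)$; for $\lambda=1$ it is, via the Leau--Fatou flower theorem, a branch $\psi_0$ of the inverse of a Fatou coordinate on an attracting petal, with $r\circ\psi_0(u)=\psi_0(u+1)$. Since $r\colon A^*\to A^*$ is a covering, I would extend $\psi_0$ to all of $\mathbb{C}$ by repeatedly lifting the functional equation through $r$ over the exhausting family of simply connected domains $\mathbb{D}_{\varepsilon/|\lambda|^k}$ (respectively, for a suitable real $R$, the half-planes $\{\,\mathrm{Re}\,w>R-k\,\}$ in the parabolic case), uniqueness of lifts guaranteeing compatibility at each step. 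This produces a nonconstant holomorphic map $\psi\colon\mathbb{C}\to A^*$. But $A^*$ is hyperbolic, so $\psi$ lifts to the universal cover $\mathbb{D}$ of $A^*$ and is therefore constant by Liouville --- a contradiction. Hence $A^*$ contains a critical point of $r$, which completes the proof.

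To summarize the structure: the beginning (the two reductions) and the end (building $\psi$ from the local normal form, then uniformization plus Liouville) are routine; the weight of the argument sits in the middle --- showing that the absence of critical points in $A^*$ forces $r\colon A^*\to A^*$ to be a covering --- which rests on the fact that the boundary of a Fatou component lies in the Julia set.
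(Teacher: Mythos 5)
There is nothing in the paper to compare your argument with: the lemma is quoted as a classical principle of complex dynamics (Fatou's theorem that every attracting or parabolic cycle attracts a critical orbit), used as a black box in the cited results of Khavinson--\'Swi\c{a}tek and Khavinson--Neumann, and no proof is given in the paper. Your proposal is a correct rendition of the standard proof (essentially the argument in Milnor's \emph{Dynamics in One Complex Variable}, the attracting and parabolic cases): reduce to multiplier $0<|\lambda|<1$ or $\lambda=1$, show that absence of critical points in the immediate basin $A^*$ makes $r\colon A^*\to A^*$ a proper unramified map, hence a covering (your use of $\partial A^*\subseteq J(r)$ for properness is exactly the right point), then extend the Koenigs linearizer, respectively the inverse Fatou coordinate, over the exhaustion by disks $\mathbb{D}_{\varepsilon/|\lambda|^k}$, respectively half-planes, to obtain a nonconstant holomorphic map $\mathbb{C}\to A^*$, contradicting hyperbolicity of $A^*$. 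Two small points to tighten. First, in the parabolic case your conclusion needs the (standard, but unstated) fact that the entire Fatou component containing an attracting petal lies in the parabolic basin of $z_0$; without it, the critical point you produce in $A^*$ is not yet known to be attracted to $z_0$. Second, the restriction $\deg r\ge 2$ is not a matter of the statement being ``vacuous'' for M\"obius maps: a degree-one map can have an attracting fixed point and no critical points, so the lemma as stated is false in that case; the hypothesis $\deg r\ge 2$ is genuinely needed and is satisfied in the intended application, where $r(z)=\overline{\varphi(\overline{\varphi(z)})}$ has degree $n^2$. You might also note, as an alternative, the more commonly quoted proof of the attracting case via normal families of iterated inverse branches of $r$ fixing $z_0$ (their derivatives $\lambda^{-k}$ blow up, contradicting Montel), which avoids constructing the global linearizer; but your covering-plus-uniformization route is equally standard and has the advantage of treating the attracting and parabolic cases uniformly.
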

Elkadi and Galligo studied this problem from computational point of view  to construct such a mixed polynomial explicitly and proposed the similar problem for generalized lens polynomials $L(n+m;n,m)$
 (\cite{Elkadi-Galligo}).

 \section{Relation of strongly polar weighted homogeneous polynomials and number of zeros without sign}
 Consider a strongly mixed weighted homogeneous polynomial $F(\bfz,\bar \bfz)$ of
two variables $\bfz=(z_1.z_2)$ with
 polar weight $P={}^t(p,q)$, $\gcd(p.q)=1$ and let $d_p,\,d_r$ be the polar and  radial degrees
respectively.
 Let 
 \[
 \mathbb C^*\times \mathbb C^2\to \mathbb C^2,\quad 
 (\rho,(z_1,z_2))\mapsto \rho\underset P\cdot (z_1,z_2):=(z_1\rho^p,z_2\rho^q)
 \] be the associated $\mathbb C^*$-action.
Recall that $F$ satisfies the Euler equality:
\[
F(r\exp(\theta i)\underset P\cdot( \bfz,\bar\bfz))=r^{d_r}\exp(d_p\theta i)F(\bfz,\bar\bfz).
\]
 A strongly mixed polynomial is the case where the weight is 
 the canonical weight ${\bf 1}:={}^t(1,1)$.
Consider the global Milnor fibration $F:\,\mathbb C^2\setminus F\inv({\bf 0})\to \mathbb C^*$ and 
 let $M=\{\bfz\in \mathbb C^2|F(\bfz,\bar\bfz)=1\}$ be the Milnor fiber. 

 We assume further that $F$ is convenient.
  By the convenience assumption and the strong mixed weighted homogenuity,
 we can find some  integers $n,r$ such that 
 \[
 d_p=npq,\quad d_r=(n+2r)pq
 \]
 and  we can write 
$F(z_1,\bar z_1,z_2,\bar z_2)$ as a linear combination of monomials 
$z_1^{\nu_1}z_2^{\nu_2}\bar z_1^{\mu_1}\bar z_2^{\mu_2}$
 where the summation satisfies the equality
 \[(\nu_1+\nu_2)p+(\mu_1+\mu_2)q=d_r\quad (\nu_1-\nu_2)p+(\mu_1-\mu_2)q=d_p.
 \]
 In particular, we see that 
 the coefficients of $z_1^{(n+r)q}\bar z_1^{rq}$ and $z_2^{(n+r)p}\bar z_2^{rp}$ are non-zero
 and any other monomials satisfies
 \[\nu_1,\nu_2\equiv 0\,\mod\, q,\quad \mu_1,\mu_2\equiv 0\, \mod \, p.\]
The monodromy mapping $f:M\to M$ is defined by
\[
h:M\to M,\quad \bfz\mapsto  \exp(2\pi i/npq)\underset P\cdot \bfz=\exp(2\pi i/nq) z_1,\exp(2\pi i/np)z_2).
\]
 Thus there exists a strongly mixed homogeneous polynomial $G(\bfw,\bar\bfw),\,\bfw=(w_1,w_2)$
 of polar degree $n$ and radial degree $(n+2r)$
 such that
 \[
 F(\bfz,\bar\bfz)=G(z_1^q,{\bar z}_1^q,z_2^p,{\bar z}_2^p).
 \]
The curve $F=0$ is invariant under the above $\mathbb C^*$-action.
 Let $\mathbb P^1(P)$ be the weighted projective line
which is the quotient space of $\mathbb C^2\setminus\{\bf 0\}$ by the above action. 
 It has two singular points $A=[1,0]$ and $B=[0,1]$ (if $p,q\ge 2$) and  the complement
$U:=\mathbb P^1(P)\setminus\{A,B\}$ is isomorphic to 
 $\mathbb C^*$ with coordinate $z:=z_1^q/z_2^p$.
Note that $z$ is well defined on $z_2\ne 0$.
 The zero locus of $f$ in $\mathbb P^1(P)$, $V(f)$,   does not contain $A,B$ and it is defined
on $U$ by the mixed polynomial
 $f(z,\bar z)=0$ where $f$ is defined by the equality:
 \[\begin{split}
 f(z,\bar z)&:= F(\bfz,\bar\bfz)/(z_2^{(n+r)p}\bar z_2^{rp})\\
 &=c\,z^{n+r}\bar z^r+\sum_{i,j} a_{i,j}z^i\bar z^j
 \end{split}
  \]
  where the summation is taken for $i\le n+r,\,j\le r$ and $i+j<n+2r$
and $c\ne 0$ is the coefficient of $z_1^{n+r}\bar z_1^r$ in $F$.
Note also that $g(z)=f(z)$ where 
\[
g(w):=G(w_1,\bar w_1,w_2,\bar w_2)/(w_2^{n+r}\bar w_2^r),\quad w=w_1/w_2.
\]
Thus in these affine coordinates $z,w$,
we have
\[
\begin{split}
&z_1^{q n_1}{\bar{ z}_1}^{qn_2}z_2^{pm_1}{\bar z_2}^{pm_2}/(z_2^{(n+r)p}\bar z_2^{rp})=z^{n_1}{\bar z}^{n_2},\\
&w_1^{n_1}{\bar w_1}^{n_2}w_2^{m_1}{\bar w_2}^{m_2}/(w_2^{n+r}\bar w_2^r)=w^{n_1}{\bar w}^{n_2}
\end{split}
\]
This implies that $f(z)=g(z)$,  the number of points of $V(f(z))$ and $V(g(w))$ are equal in their respective projective spaces
and 
\[
f=g\in M(n+2r;n+r,r). 
\]
  The associated $\mathbb C^*$-action to $G(\bfw,\bar\bfw)$ is the canonical linear action and we simply denote
  it as $\rho\cdot \bfw$ instead of $\rho\underset{\bf 1}\cdot \bfw$.
 Let $M(G)$ be the Milnor fiber of $G$ and let $\mathbb P^{1}$ be the usual projective line. 
 The monodromy mapping $h_G:M(G)\to M(G)$ of $G$ is given by
 $h_G(\bfw)=\exp(2\pi i/n)\cdot \bfw$.
 Then we have a canonical diagram
 \[
 \begin{matrix} 
C^2&\mapright{\vphi_{q,p}}&\mathbb C^2\\
\uparrow&&\uparrow\\
 M&\mapright{\vphi_{q,p}}& M(G)\\
 \mapdown{\pi}&&\mapdown{\pi'}\\
 \mathbb P^1(P)\setminus V(f)&\mapright{\bar \vphi_{q,p}}&\mathbb P^1\setminus V(g)
 \end{matrix}
 \] $\pi$ is a $\mathbb Z/d_p\mathbb Z$-cyclic covering branched over $\{A,B\}$,( $d_p=npq$)
 while $\pi'$ is a $\mathbb Z/n\mathbb Z$-cyclic covering without any branch locus.
 $\vphi_{q,p}$ is defined $\vphi_{q,p}(z_1,z_2)=(z_1^q,z_2^p)$ which satisfies
 $\vphi_{q,p}(\rho\underset P\cdot\bfz)= \rho^{pq}\cdot (z_1^q,z_2^p),\,\rho\in S^1$
 and thus $\vphi_{q,p}\circ h=h_G\circ \vphi_{q,p}$ as we have
 \[\begin{split}
\vphi_{q,p}(h(\bfz))&=
\vphi( (\exp(2\pi i/nq)z_1,\exp(2\pi i/np)z_2 )\\
&=(\exp(2\pi i/n) z_1^q,\exp(2\pi i/n)z_2^p)=
h_G(\vphi_{q,p}(\bfz)).
\end{split}
\]
 The mapping $\bar\vphi_{q,p}$ is canonically induced by $\vphi_{q,p}$
 and we observe that
 $\bar\vphi_{q,p}$ gives  a bijection of
 \[\bar\vphi_{q,p}:\mathbb P^1(P)\setminus \{A,B\} \to \mathbb P^1\setminus\{\bar A,\bar B\}\]
and   it induces an bijection of $V(f)$ and $V(g)$.
Here $\bar A=[1:0]$ and $\bar B=[0,1]$.
 Recall that by \cite{OkaMix,VO}, we have
 \begin{Proposition}\label{Correspondence1}
 \begin{enumerate}
 \item$\chi(M(G))=n(2-\rho(g))$.
 \item $\chi(M)=-npq\rho(f)+n(p+q).$
\item The links $K_F:=F\inv(0)\cap S^3$ and $K_G:=G\inv(0)\cap S^3$ have the same number of components and it is given by $\rho(f)$.
\end{enumerate}
 \end{Proposition}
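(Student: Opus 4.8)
The plan is to derive all three statements from the multiplicativity of the Euler characteristic under finite coverings and its additivity over locally closed stratifications, applied to the two cyclic coverings $\pi,\pi'$ and to the branched covering $\vphi_{q,p}\colon M\to M(G)$ of the diagram above. For (1): since $G$ is convenient, the coefficients of $w_1^{n+r}\bar w_1^{r}$ and $w_2^{n+r}\bar w_2^{r}$ in $G$ are non-zero, so $V(g)$ avoids $\bar A,\bar B$ and $\chi(\mathbb P^1\setminus V(g))=2-\rho(g)$. The point to check is that $\pi'$ is an \emph{unbranched} $n$-fold covering: for $[\bfw]\notin V(g)$ with representative $\bfw$, the Euler identity $G(t e^{i\theta}\cdot\bfw)=t^{n+2r}e^{i n\theta}G(\bfw)$ shows the orbit $\mathbb C^*\cdot\bfw$ meets $M(G)$ in exactly $n$ points (one admissible modulus $t$, $n$ admissible arguments in $[0,2\pi)$), while the generator $h_G\colon\bfw\mapsto\exp(2\pi i/n)\cdot\bfw$ of the deck group $\mathbb Z/n\mathbb Z$ fixes no point of $M(G)$ since that would force $w_1=w_2=0$. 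Hence $\chi(M(G))=n\,\chi(\mathbb P^1\setminus V(g))=n(2-\rho(g))$.

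For (2), first observe that $\rho(g)=\rho(f)$, because $\bar\vphi_{q,p}$ restricts to a bijection $V(f)\leftrightarrow V(g)$ (equivalently $f=g$). Stratify $\mathbb P^1(P)\setminus V(f)=(U\setminus V(f))\sqcup\{A\}\sqcup\{B\}$. Over $U\setminus V(f)\cong\mathbb C^*$ with $\rho(f)$ points removed, $\pi$ is an unbranched $d_p$-fold covering: on $U$ the action $\rho\underset P\cdot(z_1,z_2)$ has trivial isotropy since $\gcd(p,q)=1$, so $\mathbb Z/d_p\mathbb Z$ acts freely with quotient $U\setminus V(f)$; this stratum thus contributes $d_p\cdot(-\rho(f))=-npq\,\rho(f)$ to $\chi(M)$. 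Over $A=[1:0]$ the orbit is $\{(w,0):w\in\mathbb C^*\}$, and by convenience $F(w,0)$ is a non-zero scalar times $w^{(n+r)q}\bar w^{rq}$, so $F(w,0)=1$ has exactly $nq$ solutions; symmetrically $\pi^{-1}(B)$ consists of $np$ points. Adding the three contributions, $\chi(M)=-npq\,\rho(f)+nq+np=-npq\,\rho(f)+n(p+q)$. (Alternatively, apply Riemann--Hurwitz to $\vphi_{q,p}\colon M\to M(G)$, a degree-$pq$ covering branched along $z_1=0$ and $z_2=0$ over the $2n$ points of ${\pi'}^{-1}(\{\bar A,\bar B\})$ with ramification indices $q$ and $p$, and combine with (1); the algebra collapses to the same answer.)

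For (3), $V(F)\setminus\{\mathbf 0\}$ and $V(G)\setminus\{\mathbf 0\}$ are each a union of orbits of the corresponding $\mathbb C^*$-action, one orbit lying over each point of $V(f)\subset U$, resp.\ of $V(g)$, and none over $A,B$ (resp.\ $\bar A,\bar B$), since $F(1,0)\ne 0$ and $F(0,1)\ne 0$. Each such orbit meets $S^3$ in exactly one embedded circle: for $F$ the equation $t^{2p}|z_1|^2+t^{2q}|z_2|^2=1$ has a unique positive solution $t$, and $\theta\mapsto t e^{i\theta}\underset P\cdot\bfz$ is injective on $[0,2\pi)$ because $\gcd(p,q)=1$ (and the case $p=q=1$ gives the statement for $G$). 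Therefore $K_F$ has $\rho(f)$ components and $K_G$ has $\rho(g)$ components, and $\rho(f)=\rho(g)$ as above.

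The main obstacle is not the Euler-characteristic arithmetic but pinning down the local structure of $\pi$ (and of $\vphi_{q,p}$) over $A$ and $B$: one must invoke convenience, through the monomial description of $F$ recalled above, to see that the fibres $\pi^{-1}(A),\pi^{-1}(B)$---equivalently ${\pi'}^{-1}(\bar A),{\pi'}^{-1}(\bar B)$---are non-empty, finite, and of the asserted cardinalities, and that $\{z_1=0\}\cup\{z_2=0\}$ is exactly the branch locus of $\vphi_{q,p}$. Once that is settled, (1)--(3) follow from the two multiplicativity/additivity principles above.
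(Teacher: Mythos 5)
Your proof is correct and follows essentially the same route as the paper: additivity of the Euler characteristic over the strata $U\setminus V(f)$, $\{A\}$, $\{B\}$ together with multiplicativity under the cyclic coverings $\pi'$ (degree $n$) and $\pi$ (degree $npq$), the point counts $nq$ and $np$ over $A$ and $B$ coming from convenience, and the $S^1$-invariance (orbit structure) of the link components for (3). You merely spell out details the paper leaves implicit (freeness of the deck actions, the orbit-circle argument), so there is nothing further to add.
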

\begin{proof}
The assertion follows from a simple calculation of Euler characteristics. 
(1) is an immediate result that $M(G)\mapright{\pi'} \mathbb P^1\setminus V(G)$ is  an $n$-fold cyclic covering.
(2) follows from the following.
\[\pi: M\cap \mathbb C^{*2}\to \mathbb P^1(P)\setminus ( \{A,B\}\cup V(F)\] is an $npq$-cyclic covering   while
$M\cap\{z_1=0\}$ and $M\cap\{z_2=0\}$  are $np$ and $nq$ points respectively.
Thus 
\begin{eqnarray*}
\chi(M)&=&\chi(M\cap \mathbb C^{*2})+\chi(M\cap\{z_1=0\})+\chi(M\cap \{z_2=0\})\\
&=&npq(-\rho(f))+np+nq
\end{eqnarray*}
The link components of $K_F$ and $K_G$ are $S^1$ invariant and the assertion (3) follows from this observation.
\end{proof}
 The correspondence $F(\bfz,\bar\bfz)\mapsto f(z)$ is reversible. Namely we have
\begin{Proposition}\label{Correspondence2}
 For a given $f(z,\bar z)\in M(n+m;n,m)$ and any weight vector $P={}^t(p,q)$, we can define
 a strongly mixed weighted homogeneous polynomial  of two variables $\bfz=(z_1,z_2)$ with weight $P$ by
 \[
 F(\bfz,\bar\bfz):=f(z_1^q/z_2^p,\bar z_1^q/\bar z_2^p)z_2^{pn}{\bar z_2}^{pm}.
 \]
The polar degree and the radial degree of $F$ are  $(n-m)pq$ and $(n+m)pq$ respectively.
The coefficient of  $z_1^{nq}\bar z_1^{qm}$ in $ F$ is the same as that of $z^{n}\bar z^m$.

 If $f$ has non-zero constant term,
$F$ is convenient polynomial.
The correspondence
\[
F(\bfz,\bar \bfz)\mapsto f(z,\bar z),\quad f(z,\bar z)\mapsto F(\bfz,\bar \bfz)
\]
are inverse of the other.
\end{Proposition}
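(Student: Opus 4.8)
The plan is to check the three named numerical invariants directly against the construction $F(\bfz,\bar\bfz):=f(z_1^q/z_2^p,\bar z_1^q/\bar z_2^p)\,z_2^{pn}\bar z_2^{pm}$, since everything reduces to bookkeeping once one verifies that this expression really is a polynomial with the asserted weighted homogeneity. First I would expand $f(z,\bar z)=\sum_{\nu,\mu}a_{\nu,\mu}z^\nu\bar z^\mu$ and substitute: a monomial $z^\nu\bar z^\mu$ with $\nu\le n$, $\mu\le m$ contributes $a_{\nu,\mu}\,z_1^{q\nu}\bar z_1^{q\mu}z_2^{p(n-\nu)}\bar z_2^{p(m-\mu)}$, which is visibly a polynomial in $z_1,z_2,\bar z_1,\bar z_2$ because $n-\nu\ge 0$ and $m-\mu\ge 0$. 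This already identifies the coefficient of $z_1^{nq}\bar z_1^{qm}$ with $a_{n,m}$, i.e. with the coefficient of $z^n\bar z^m$ in $f$ (this is the top-degree term since $\deg_z f=n$, $\deg_{\bar z} f=m$), and it shows that if $a_{0,0}\ne 0$ then $F$ contains the monomial $z_2^{pn}\bar z_2^{pm}$; together with the $z_1^{nq}\bar z_1^{qm}$ term this gives nonzero restriction to both axes, hence convenience.

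Next I would verify the $\Cp^*$-action invariance. Under $\bfz\mapsto\rho\underset P\cdot\bfz$ we have $z_1^q/z_2^p\mapsto \rho^{pq}/\rho^{pq}\cdot z_1^q/z_2^p = z_1^q/z_2^p$ (the ratio is a weight-zero coordinate), while $z_2^{pn}\bar z_2^{pm}\mapsto \rho^{pqn}\bar\rho^{pqm}\,z_2^{pn}\bar z_2^{pm}$. For $\rho=r\exp(\theta i)$ this multiplies $F$ by $r^{pq(n+m)}\exp\big(pq(n-m)\theta i\big)$, which is exactly the Euler relation with radial degree $d_r=(n+m)pq$ and polar degree $d_p=(n-m)pq$; one should also note $\gcd$ of the exponents appearing forces strong mixedness in the sense of the paper, and that each monomial of $F$ indeed satisfies the congruences $\nu_i\equiv 0\ (\mathrm{mod}\ q)$, $\mu_i\equiv 0\ (\mathrm{mod}\ p)$ by construction. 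This establishes that $F$ is a strongly mixed weighted homogeneous polynomial of the stated degrees, so the setup of the previous section applies to it.

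Finally, the reversibility claim. Going the other way, starting from such an $F$ one forms $f(z,\bar z):=F(\bfz,\bar\bfz)/(z_2^{(n+r)p}\bar z_2^{rp})$ as in the section text — but in the present normalization $r=m$ (so $n+r$ there is our $n$, and $2r$ is $n+m-(n-m)$), i.e. one simply divides by $z_2^{pn}\bar z_2^{pm}$ after substituting $z=z_1^q/z_2^p$. Composing the two operations: starting from $f$, building $F$, then dividing $F$ by $z_2^{pn}\bar z_2^{pm}$ and setting $z=z_1^q/z_2^p$ returns $f(z_1^q/z_2^p,\bar z_1^q/\bar z_2^p)$, which under the coordinate $z$ is just $f(z,\bar z)$; conversely, starting from $F$, extracting $f$ and then re-homogenizing multiplies back by exactly the monomial that was divided out, because the homogenization of $z^\nu\bar z^\mu$ with respect to $(\deg_z,\deg_{\bar z})=(n,m)$ and weight $P$ is uniquely $z_1^{q\nu}\bar z_1^{q\mu}z_2^{p(n-\nu)}\bar z_2^{p(m-\mu)}$, and summing these recovers $F$ term by term. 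I expect the only delicate point to be the last sentence: one must be careful that the convenience / weighted-homogeneity hypotheses on $F$ guarantee $\deg_z f=n$ and $\deg_{\bar z}f=m$ exactly (not less), so that re-homogenizing uses the same bidegree $(n,m)$ and genuinely inverts the first map — this is where the nonvanishing of the coefficients of $z_1^{nq}\bar z_1^{qm}$ and $z_2^{np}\bar z_2^{mp}$, noted above, is used.
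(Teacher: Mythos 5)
Your proposal is correct and follows essentially the same route as the paper's (very terse) proof, namely the monomial bookkeeping $z^\nu\bar z^\mu \mapsto z_1^{q\nu}\bar z_1^{q\mu}z_2^{p(n-\nu)}\bar z_2^{p(m-\mu)}$, from which the stated degrees, the identification of the coefficient of $z_1^{nq}\bar z_1^{qm}$, convenience, and invertibility all follow. Your explicit check of the Euler identity and of the two compositions only spells out what the paper leaves implicit.
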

\begin{proof}
In fact, the monomial $z^i{\bar z}^j,\,i+j\le n+m,\,i\le n,\,j\le m$
changes into 
$z_1^{qi}{\bar z_1}^{jq }z_2^{p(n-i)}{\bar z_2}^{p(m-j)}$.
In particular,
\[
z^n{\bar z}^m\mapsto z_1^{qn}{\bar z_1}^{qm},\quad
1\mapsto z_2^{pn}{\bar z_2}^{pm}.
\]
\end{proof}
It is well-known that the Milnor fibration of a weighted homogeneous polynomial $h(\bfz)\in \mathbb C[z_1,\dots, z_n]$ with an islated singularity at the origin is described by the weight and the degree by Orlik-Milnor \cite{Orlik-Milnor}. This  assertion is not true for a mixed weighted homogeneous polynomials.

Let 
\[\tilde M(n+m;n,m;P),\,\, {\tilde L}^{hs}(n+m;n,m;P),\,\,\tilde L(n+m;n,m;P)\]
 be the space of strongly mixed weighted homogeneous
convenient  polynomials of two variables  with weight $P=(p,q),\,\gcd(p,q)=1$ and with isolated singularity at the origin
which corresponds to $M(n+m;n,m), L^{hs}(n+m;n,m),L(n+m;n,m)$
 respectively through Proposition \ref{Correspondence1}
and Proposition \ref{Correspondence2}.  For $P=(1,1)$, we simply write as 
\[\tilde M(n+m;n,m),\,\, {\tilde L}^{hs}(n+m;n,m),\,\,\tilde L(n+m;n,m)\]
\begin{Proposition}\label{correspondebce3}
The moduli spaces $\tilde M(n+m;n,m;P)$, $ {\tilde L}^{hs}(n+m;n,m;P)$,
$\tilde L(n+m;n,m;P)$
are isomorphic to the moduli spaces
$M(n+m;n,m)$,  $L^{hs}(n+m;n,m)$, $L(n+m;n,m)$ respectively.
\end{Proposition}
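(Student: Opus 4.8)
The plan is to prove the proposition by exhibiting the isomorphisms as the pair of mutually inverse maps furnished by Proposition \ref{Correspondence2}, and then verifying that this pair carries the defining conditions of the tilde spaces to matching conditions on $f$. Fix a weight $P={}^t(p,q)$ with $\gcd(p,q)=1$ and set
\[
\Phi_P(f)(\bfz,\bar\bfz):=f(z_1^q/z_2^p,\,\bar z_1^q/\bar z_2^p)\,z_2^{pn}\bar z_2^{pm},\qquad
\Psi_P(F)(z,\bar z):=F(\bfz,\bar\bfz)/(z_2^{pn}\bar z_2^{pm}),
\]
the latter regarded as a function of $z=z_1^q/z_2^p$ and $\bar z$. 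First I would record that $\Phi_P$ is \emph{linear} in the coefficients: it sends $z^i\bar z^j$ (with $0\le i\le n$, $0\le j\le m$) to $z_1^{qi}\bar z_1^{qj}z_2^{p(n-i)}\bar z_2^{p(m-j)}$, and a short lattice computation using $\gcd(p,q)=1$ shows that, as $(i,j)$ runs over this range, these are \emph{exactly} the monomials in $(\bfz,\bar\bfz)$ with nonnegative exponents of radial degree $(n+m)pq$ and polar degree $(n-m)pq$ for the weight $P$. Hence $\Phi_P$ is a linear isomorphism of the coefficient space of $M(n+m;n,m)$ onto the coefficient space of all strongly mixed weighted homogeneous polynomials of weight $P$ with those bidegrees, with inverse $\Psi_P$ — this is Proposition \ref{Correspondence2}.

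Being a linear isomorphism of the ambient coefficient spaces, $\Phi_P$ restricts to an isomorphism between any two subvarieties corresponding under it, so it remains to identify the images of the defining loci. For $L(n+m;n,m)$ and $L^{hs}(n+m;n,m)$ this is a direct substitution: $\Phi_P(\bar z^m q(z)-p(z))=\bar z_1^{qm}\what q(z_1,z_2)-\bar z_2^{pm}\what p(z_1,z_2)$ and $\Phi_P(r(\bar z)q(z)-p(z))=\what r(\bar z_1,\bar z_2)\what q(z_1,z_2)-\bar z_2^{pm}\what p(z_1,z_2)$, where $\what p,\what q,\what r$ are the evident $P$-weighted homogenizations ($\what q(z_1,z_2)=\sum_k q_k z_1^{qk}z_2^{p(n-k)}$, and similarly for $\what p,\what r$); these are precisely the shapes cutting out $\tilde L(n+m;n,m;P)$ and ${\tilde L}^{hs}(n+m;n,m;P)$. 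Next I would match the convenience condition: $\Phi_P(f)$ is convenient iff $f(0,0)\ne 0$, because among the image monomials the only one on the $z_2$-axis is $z_2^{pn}\bar z_2^{pm}$ (image of the constant term) and the only one on the $z_1$-axis is $z_1^{qn}\bar z_1^{qm}$ (image of $a_{n,m}$, which is nonzero by definition of the class). Finally, since $\Phi_P(f)$ is weighted homogeneous its singular locus is a union of orbits of the associated $\BC^*$-action, while $\Phi_P(f)|_{z_1=0}=a_{0,0}z_2^{pn}\bar z_2^{pm}$ and $\Phi_P(f)|_{z_2=0}=a_{n,m}z_1^{qn}\bar z_1^{qm}$; combining these facts with the covering-space picture (Proposition \ref{Correspondence1}) relating the curve of $\Phi_P(f)$ on $\BP^1(P)\setminus\{A,B\}\cong\BC^*$ to the zeros of $f$, one obtains that $\Phi_P(f)$ has an isolated singularity at the origin exactly when $f(0,0)\ne 0$ and every zero of $f$ in $\BC^*$ is simple — i.e. precisely the standing hypothesis under which $\rho$ is studied for $M$, $L^{hs}$, $L$.

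I expect the one genuinely non-formal step to be this last equivalence, ``isolated mixed singularity of $\Phi_P(f)$ at the origin $\Longleftrightarrow$ $f(0,0)\ne 0$ and $f$ has only simple zeros in $\BC^*$'': this is where the weighted-homogeneous structure and the behaviour at the two quotient-singular points $A,B$ of $\BP^1(P)$ really enter, and the argument there is that of the general theory of strongly polar weighted homogeneous polynomials (cf. \cite{OkaMix}). Everything else is the bookkeeping of relabeling monomials. Granting that equivalence, $\Phi_P$ and $\Psi_P$ restrict to the asserted mutually inverse isomorphisms $\tilde M(n+m;n,m;P)\cong M(n+m;n,m)$, ${\tilde L}^{hs}(n+m;n,m;P)\cong L^{hs}(n+m;n,m)$, and $\tilde L(n+m;n,m;P)\cong L(n+m;n,m)$, uniformly in the weight $P$.
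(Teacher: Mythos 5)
Your argument is essentially the paper's (implicit) one: Proposition \ref{correspondebce3} is stated there without a separate proof, being regarded as an immediate consequence of the mutually inverse correspondences $f\mapsto F$ and $F\mapsto f$ of Propositions \ref{Correspondence1} and \ref{Correspondence2}, which is exactly the route you follow. Your extra bookkeeping --- that the image monomials exhaust the monomials of radial degree $(n+m)pq$ and polar degree $(n-m)pq$ (using $\gcd(p,q)=1$), the matching of the $L$ and $L^{hs}$ shapes under weighted homogenization, and the translation of convenience and of the isolated singularity at the origin into $f(0,0)\ne 0$ together with simplicity of the zeros in $\mathbb C^*$ --- is correct and simply makes explicit what the paper leaves to the reader.
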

As the above  moduli spaces  do not depend on the weight $P$ (up to isomorphism), we only consider hereafter strongly mixed homogeneous polynomials.
 Assume that  two polynomials $F_1, F_2$ are in a same connected coponent. then their
Milnor fibrations, are equivalent. Thus 
\begin{Corollary} Assume that $F_1, F_2\in \tilde M(n+m;n,m) $ has different number of  link components $\rho(f_1),\, \rho(f_2)$. Then they belongs to different connected components of $\tilde M(n+m;n,m)$.
In particular, the number of the connected components of $ \tilde M(n+m;n,m)$ is not smaller than the number of 
\newline
 $\{\rho(f)\,|\,f\in M(n+m;n,m)\}$.
\end{Corollary}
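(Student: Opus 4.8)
The plan is to read off the Corollary from Proposition~\ref{Correspondence1}(3) together with the invariance of the Milnor fibration along connected components of $\tilde M(n+m;n,m)$ that was recorded just above the statement. No genuinely new construction is needed; the work is in assembling the pieces in the right order.

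First I would fix $F_1,F_2$ in one connected component of $\tilde M(n+m;n,m)$. By that invariance their Milnor fibrations are equivalent, so the links $K_{F_1}=F_1\inv(0)\cap S^3$ and $K_{F_2}=F_2\inv(0)\cap S^3$ are carried to one another by a homeomorphism of $S^3$ and in particular have the same number of connected components. Feeding this into Proposition~\ref{Correspondence1}(3), which identifies the number of components of $K_{F_i}$ with $\rho(f_i)$ for the associated one-variable mixed polynomial $f_i$, I get $\rho(f_1)=\rho(f_2)$. The first assertion of the Corollary is exactly the contrapositive of this: distinct values of $\rho$ force $F_1$ and $F_2$ into distinct connected components.

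For the numerical consequence I would set $R:=\{\rho(f)\mid f\in M(n+m;n,m)\}$ and use Propositions~\ref{Correspondence2} and~\ref{correspondebce3}: the assignment $F\mapsto f$ is a bijection between $\tilde M(n+m;n,m)$ and $M(n+m;n,m)$ carrying $F$ to the same one-variable polynomial whose zero count $\rho$ we are tracking, so every value $v\in R$ is attained as $\rho(f_F)$ for some $F\in\tilde M(n+m;n,m)$. The function assigning to each connected component of $\tilde M(n+m;n,m)$ the common value of $\rho$ on that component is well defined by the first assertion and surjective onto $R$ by this realization, whence the number of connected components is at least $\#R$.

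The one place where I expect real work rather than bookkeeping is the input ``same connected component $\Rightarrow$ equivalent Milnor fibrations'': everything else is formal once that is granted. If it were not already available I would argue it by taking a real-analytic path $F_t$ in $\tilde M(n+m;n,m)$, noting that each $F_t$ is strongly mixed homogeneous with an isolated singularity at the origin, trivializing the resulting family of fibrations over a small sphere $S^3_\varepsilon$ by a Thom--Mather isotopy, and then using the $\mathbb C^*$-homogeneity of the $F_t$ to spread this local triviality over all of $\mathbb C^2\setminus\{0\}$; the homogeneity-driven globalization from the small sphere to the whole space is the only non-routine ingredient, and it is precisely what makes the Milnor fibration locally constant on the moduli space.
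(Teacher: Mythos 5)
Your proposal is correct and follows essentially the same route as the paper: the paper treats the Corollary as an immediate consequence of the observation, stated just before it, that two polynomials in the same connected component of $\tilde M(n+m;n,m)$ have equivalent Milnor fibrations, combined with Proposition \ref{Correspondence1}(3) identifying the number of link components with $\rho(f)$, and with Propositions \ref{Correspondence2} and \ref{correspondebce3} guaranteeing that every value of $\rho$ is realized. Your extra sketch of why the Milnor fibration is constant along connected components is a reasonable filling-in of an input the paper simply takes for granted.
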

 \begin{Remark} 
For a fixed number $\rho$ of link components, we do not know if the subspace of the moduli space with  link number $\rho$ is connected or not.
 \end{Remark}

 \begin{Example}\label{degree2}
Consider a strongly mixed homogeneous polynomial $F$ of polar degree 1 and radial degree 3.
Namely $f\in M(3;2,1)$.
 Its possible link components are 1,3,5. $1$ and $3$ are given in Example 59, \cite{OkaMix}.
 An example of 5 components are given by Bleher-Homma-Ji-Roeder (\cite{Bleher}). For example, we can take
 \[\begin{split}
 f(z)&=\bar z(z^2-1/2)-z+1/30\\
 F(\bfz,\bar\bfz)&=\bar z_1 (z_1^2-z_2^2/2)-(z_1z_2-z_2^2/30)\bar z_2
 \end{split}
 \]
 \end{Example}
\section{Extended lens equation}
\subsubsection{Extended lens equation}
One of the main purposes of this paper is to study the number of zeros  of the following extended lens equation for a given $m\ge 1$ and its perturbation.
\[
 L(z,\bar z)=\bar z^m- \dfrac{p(z)}{q(z)},\quad \deg\,q=n,\, \deg\,p
 \le n.
\]
The corresponding mixed polynomail is in $L(n+m;n,m)\subset M(n+m;n,m)$.
We will construct a mixed polynomial
for which 
the example of Rhie is extended.
However a  simple generalization of Proposition \ref{bound} seems not possible. The reason is the following.
Consider the function
\[\vphi:=\root m\of {{\frac{\overline{ p(z)}}{\overline{ q(z)}}}}
\]
and the composition
$\psi:=\vphi\circ \vphi$.   $\psi$ is a locally holomorphic function but the point is that 
$\vphi $ and $\psi$ are  multi-valued functions, not single valued if $m\ge 2$.
Thus we do not know any meaningful upper bound of $\rho(L)$.
\subsection{A symmetric case}
Here is one special case where we can say more.
Suppose  that $m$ divide $n$ and put $n_0=n/m$.
Assume that $p(z)/q(z)$ is $m$-symmetric, in the sense that there exists polynomials
$p_0(z),q_0(z)$ so that 
$p(z)=p_0(z^m)$ and $q(z)=q_0(z^m)$.  We assume that 
$p_0(0)\ne 0$.  In this case, we can consider the lens equation
\begin{eqnarray}
L_0(z,\bar z)&:=&\bar z- \vphi_0(z),\quad \vphi_0(z)=\dfrac{p_0(z)}{q_0(z)},\\
L(z,\bar z)&:=&\bar z^m- \vphi(z),\quad \vphi(z)=\dfrac{p(z)}{q(z)}.
\end{eqnarray}
As $L(z,\bar z)=L_0(z^m,{\bar z}^m)$,
there is  $m:1$ correspondence  between the non-zero roots of $L$ and $L_0$.
Thus by Proposition \ref{bound},
we have
\[\rho(L)=m\rho (L_0)\le  m(5n_0-5)=5n-5m.\]
\begin{Corollary}\label{case-n=2m}Suppose that $n=2m$ and let 
$f(z,\bar z)=\bar z-\frac{z-1/30}{z^2-1/2}$ as in Example \ref{degree2}.
Put $f_{2m}(z)=f(z^m,\bar z^m)$. Then $\rho(f_{2m})=5m$ and
the corresponding strongly mixed  homogeneous polynomial $F_{2m}$ is contained  in $\tilde L(3m;2m,m)$.
\end{Corollary}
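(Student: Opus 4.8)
The plan is to exploit the $m:1$ correspondence already established in this subsection, applied to the specific polynomial from Example \ref{degree2}. Recall from Example \ref{degree2} that $f(z,\bar z)=\bar z-\frac{z-1/30}{z^2-1/2}$, after clearing denominators, is a mixed polynomial in $M(3;2,1)$ whose corresponding strongly mixed homogeneous polynomial has $5$ link components; equivalently, $\rho(f)=5$, i.e. the lens equation $\bar z = \frac{z-1/30}{z^2-1/2}$ has exactly $5$ simple roots (this is the $n=2$, $5n-5=5$ case of Rhie/Bleher-Homma-Ji-Roeder). Here the role of $\vphi_0$ in the symmetric-case discussion is played by $\vphi_0(z)=\frac{z-1/30}{z^2-1/2}$, with $p_0(z)=z-1/30$, $q_0(z)=z^2-1/2$, so $n_0=2$ and $\deg q_0 = 2 = n_0$, $p_0(0)=-1/30\ne 0$.

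First I would set $n=2m$, $p(z)=p_0(z^m)=z^m-1/30$, $q(z)=q_0(z^m)=z^{2m}-1/2$, so that $\deg q = 2m = n$, $\deg p = m \le n$, and $p(0)=p_0(0)\ne 0$. Then $L(z,\bar z)=\bar z^m-\vphi(z)$ with $\vphi(z)=p(z)/q(z)$ satisfies $L(z,\bar z)=L_0(z^m,\bar z^m)$ where $L_0(z,\bar z)=\bar z - \vphi_0(z)$, exactly as in equations (the two displayed lens equations of the symmetric case). The numerator $f_{2m}(z,\bar z)=f(z^m,\bar z^m)$ then lies in $M(3m;2m,m)$, and since $L(n+m;n,m)$ with $n=2m$ is $L(3m;2m,m)$, and $f_{2m}$ is literally $\bar z^m q(z) - p(z)$ up to the nonzero constant, we get $f_{2m}\in L(3m;2m,m)$.

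Next, the root count. By the displayed identity $L(z,\bar z)=L_0(z^m,\bar z^m)$, the map $z\mapsto z^m$ gives an $m:1$ correspondence between the nonzero roots of $L$ and the nonzero roots of $L_0$; since $\vphi_0(0)=p_0(0)/q_0(0)\ne 0$, $z=0$ is not a root of $L_0$, hence not of $L$, so all $\rho(L)=m\,\rho(L_0)$ roots are accounted for. With $\rho(L_0)=\rho(f)=5$ from Example \ref{degree2} we obtain $\rho(f_{2m})=\rho(L)=5m$. One should also check that these $5m$ roots remain simple: simplicity of the roots of $L_0$ plus the fact that the covering $z\mapsto z^m$ is unramified away from $0$ (and $0$ is not a root) gives that the Jacobian $J(\Re f_{2m},\Im f_{2m})$ is nonzero at each root, so $f_{2m}$ has $5m$ simple roots and $\rho$ is well defined for it.

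Finally, the statement about $F_{2m}$: by Proposition \ref{Correspondence2} and Proposition \ref{correspondebce3}, the strongly mixed homogeneous polynomial $F_{2m}$ associated to $f_{2m}\in M(3m;2m,m)$ (say with the canonical weight $P=(1,1)$) is convenient since $f_{2m}$ has nonzero constant term $-p(0)=-p_0(0)\ne 0$, and it lies in $\tilde L(3m;2m,m)$ precisely because $f_{2m}\in L(3m;2m,m)$ under the moduli-space isomorphism of Proposition \ref{correspondebce3}. By Proposition \ref{Correspondence1}(3) the link $K_{F_{2m}}$ has $\rho(f_{2m})=5m$ components, consistent with the claim. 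I do not expect a genuine obstacle here: the argument is essentially a bookkeeping exercise assembling the $m:1$-covering trick, Example \ref{degree2}, and the correspondence propositions. The one point that needs a line of care is the simplicity/well-definedness of $\rho(f_{2m})$ — verifying that lifting along $z\mapsto z^m$ preserves nonvanishing of the Jacobian — but this is routine given $0\notin V(L_0)$.
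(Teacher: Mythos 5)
Your argument is correct and follows essentially the same route as the paper: the Corollary is exactly the specialization of the preceding symmetric-case discussion ($L(z,\bar z)=L_0(z^m,\bar z^m)$ giving the $m:1$ correspondence of nonzero roots, with $p_0(0)\ne 0$ ensuring $0\notin V(L_0)$) to the $n_0=2$ lens equation of Example \ref{degree2} with $\rho=5$, combined with Propositions \ref{Correspondence2} and \ref{correspondebce3} for the passage to $F_{2m}\in\tilde L(3m;2m,m)$. Your added remarks on simplicity of the lifted roots and on convenience via the nonzero constant term are harmless refinements of the same bookkeeping the paper leaves implicit.
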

\subsection{Generalization of the Rhie's example}
So we will try to generalize the example of Rhie for the case $m\ge 2$ without assuming  $n\equiv 0\,\mod\, m$.
First we consider the following extended Lens equation:
\begin{eqnarray}\label{lens(n,m)}
 \ell_{n,m}(z,\bar z)=\bar z^m- \dfrac{z^{n-m}}{z^n-a^n}=0,\quad n>m>0,\,\, a\in
 \mathbb R_+
\end{eqnarray}
Hereafter by abuse of notation, we also denote the corresponding mixed polynomial
(i.e., the numerator)
by  the same $\ell_{n,m}(z,\bar z)$.
For the study of $V(\ell_{n,m})\setminus \{0\}$, we may consider equivalently the following: 
\begin{eqnarray}
|z|^{2m}- \dfrac{z^n}{z^n-a^n}=0.
\end{eqnarray}
This can be rewritten as
\[
z^n(|z|^{2m}-1)=|z|^{2m}a^n.
\]
Thus we have 
\begin{Proposition}
\label{positivityofroot} Take a  non-zero root $z$ of $\ell_{n,m}=0$.
If $a>0$ and $z\ne 0$,   then $|z|\ne 1$ and $z^n$ is a real number.
Thus $z^{2n}$ is a positive real number.
\end{Proposition}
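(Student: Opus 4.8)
The plan is to work directly with the numerator form of the equation, namely with the mixed polynomial
\[
\ell_{n,m}(z,\bar z)=\bar z^m(z^n-a^n)-z^{n-m},
\]
so that no division by a possibly vanishing quantity is ever performed. I would start from a root $z\ne 0$, so that $\bar z^m(z^n-a^n)=z^{n-m}$, and multiply both sides by $z^m$. Using $z^m\bar z^m=(z\bar z)^m=|z|^{2m}$ and $z^{n-m}z^m=z^n$, this gives
\[
|z|^{2m}(z^n-a^n)=z^n,
\]
which rearranges to
\[
z^n\bigl(|z|^{2m}-1\bigr)=|z|^{2m}a^n .
\]
This is precisely the rewriting recorded just before the statement, and it already contains all the information needed.

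The next step is the observation that $|z|\ne 1$: if $|z|=1$, the left-hand side of the last identity vanishes while the right-hand side equals $a^n>0$ (since $a\in\mathbb R_+$), a contradiction. Hence $|z|^{2m}-1$ is a \emph{nonzero real number}, and we may solve
\[
z^n=\frac{|z|^{2m}a^n}{|z|^{2m}-1}\in\mathbb R .
\]
Thus $z^n$ is real, and since $z\ne 0$ forces $z^n\ne 0$, we conclude $z^{2n}=(z^n)^2>0$, i.e. $z^{2n}$ is a positive real number.

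This argument has no serious obstacle; the only point deserving a word of care is the legitimacy of the passage from the rational equation to the polynomial one and of the multiplication by $z^m$ — both automatic once one works with the numerator — and the remark that a nonzero root cannot satisfy $z^n=a^n$ (otherwise $z^{n-m}=0$, forcing $z=0$). In fact even this last remark is not strictly needed, since the key identity $z^n(|z|^{2m}-1)=|z|^{2m}a^n$ holds for every nonzero root without any case distinction, and the whole proof reduces to reading off reality and positivity from it.
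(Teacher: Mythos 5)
Your argument is correct and follows essentially the same route as the paper: the paper obtains the Proposition directly from the rewriting $z^n(|z|^{2m}-1)=|z|^{2m}a^n$ derived just before the statement, which is exactly the identity you establish via the numerator and multiplication by $z^m$. The conclusions $|z|\ne 1$, $z^n\in\mathbb R$, and $z^{2n}>0$ then follow exactly as you read them off.
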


Let us consider the half lines
 \[\begin{split}
\mathbb R_+(\theta):=&\{re^{i\theta}\,|\, r\ge 0\}
\end{split}
\]
and lines 
$L_\theta$ which  are the union of two half lines:
\[L_\theta
=\mathbb R_+(\theta)\cup \mathbb R_+(\theta+\pi).
\]
Put 
\[\begin{split}
L(n):=&\bigcup_{j=0}^{n-1} L_{{2\pi j}/n},\quad
L(n)':=\bigcup_{j=0}^{n-1} L_{(2j+1)\pi/n}\\
\mathcal L(2n):=&\bigcup_{j=0}^{2n-1} \mathbb R_+(j\pi/n)
=\{z\in \mathbb C\,|\, z^{2n}\ge 0\}.
\end{split}
\]
\begin{Observation}
\begin{enumerate}
\item If $n$ is odd,  $L_{2\pi j/n}=L_{(2j+n)\pi/n}$ and thus 
$L(n)=L(n)'$ and they consists of $n$ lines and $L(n)=\mathcal L(2n)$.
\item If $n$ is even, $L(n)\cap L(n)'=\{0\}$,
$\mathcal L(2n)=L(n)\cup L'(n)$ and lines of  $L(n)$ and $L(n)'$   are  doubled.
That is, each half line $\mathbb R_+(2\pi j/n)$  and  $\mathbb R_+(\pi (2j+1)/n)$
 appear twice in $L(n)$ 
 and 
in $L(n)'$ respectively.
\end{enumerate}
\end{Observation}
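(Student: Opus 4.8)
\medskip
\noindent\textbf{Approach.}
The statement is purely combinatorial, so the plan is to translate everything into residues modulo $2n$. First I would record that a half line $\mathbb R_+(\theta)$ depends only on $\theta \bmod 2\pi$, while the line $L_\theta=\mathbb R_+(\theta)\cup\mathbb R_+(\theta+\pi)$ depends only on $\theta \bmod \pi$. Since every angle occurring in $L(n)$, $L(n)'$ and $\mathcal L(2n)$ is an integer multiple of $\pi/n$, I attach to the half line $\mathbb R_+(k\pi/n)$ the class $\bar k\in\mathbb Z/2n\mathbb Z$; then $\mathbb R_+(k\pi/n)=\mathbb R_+(k'\pi/n)$ iff $\bar k=\bar k'$, and $L_{k\pi/n}=L_{k'\pi/n}$ iff $\bar k=\bar k'$ or $\bar k=\overline{k'+n}$. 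In these terms $\mathcal L(2n)$ is the union of the half lines over all $2n$ classes; $L(n)=\bigcup_{j=0}^{n-1}L_{2\pi j/n}$ involves the half lines of classes $\overline{2j}$ and $\overline{2j+n}$ $(0\le j\le n-1)$; and $L(n)'$ involves those of classes $\overline{2j+1}$ and $\overline{2j+1+n}$. The elementary fact I will use repeatedly is that $\{\overline{2j}\mid 0\le j\le n-1\}$ is exactly the set of even classes of $\mathbb Z/2n\mathbb Z$ and $\{\overline{2j+1}\mid 0\le j\le n-1\}$ the set of odd classes.

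For $n$ odd the crucial point is a parity flip: $\overline{2j+n}$ is \emph{odd} for every $j$, and since $j\mapsto 2j+n$ is injective modulo $2n$ on $\{0,\dots,n-1\}$, these classes run over all $n$ odd classes. Hence the half lines occurring in $L(n)$ have, between them, all $2n$ classes, i.e.\ $L(n)=\mathcal L(2n)$; the same computation with $\overline{2j+1}$ (odd) and $\overline{2j+1+n}$ (even, as $n$ is odd) gives $L(n)'=\mathcal L(2n)$, so $L(n)=L(n)'$. These $2n$ half lines pair under $\theta\mapsto\theta+\pi$ into the $n$ pairwise distinct lines $L_{j\pi/n}$, $0\le j\le n-1$, which is the asserted count; and $L_{2\pi j/n}=L_{(2j+n)\pi/n}$ is merely $L_\theta=L_{\theta+\pi}$.

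For $n$ even I would write $n=2\ell$. Now $\overline{2j+n}=\overline{2(j+\ell)}$, so $\{\overline{2j+n}\mid 0\le j\le n-1\}=\{\overline{2j}\mid 0\le j\le n-1\}$ is the set of even classes, and likewise $\overline{2j+1}$ and $\overline{2j+1+n}$ are all odd. Thus $L(n)$ is the union of the $n$ half lines of even class, which form $\ell=n/2$ lines; $L(n)'$ is the union of the $n$ half lines of odd class, again $n/2$ lines; and $\mathcal L(2n)=(\text{even classes})\cup(\text{odd classes})=L(n)\cup L(n)'$. Since a half line of even class and one of odd class meet only at the origin, $L(n)\cap L(n)'=\{0\}$. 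For the ``doubling'' I would count the $2n$ slots of $\bigcup_{j=0}^{n-1}L_{2\pi j/n}$: the first slots $\mathbb R_+(2\pi j/n)$ realize each even class exactly once, and the second slots $\mathbb R_+(2\pi j/n+\pi)$ also realize each even class exactly once, so each half line $\mathbb R_+(2\pi j/n)$ occurs in exactly two of the sets $L_{2\pi j/n}$, namely $L_{2\pi j/n}$ and $L_{2\pi(j+\ell)/n}$ (index read mod $n$); the same bookkeeping for $L(n)'$ shows $\mathbb R_+(\pi(2j+1)/n)$ occurs twice.

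I do not anticipate a genuine obstacle; the single point requiring care is to distinguish the two ways an index $j$ can contribute to a union $\bigcup_j L_{2\pi j/n}$ --- producing the same \emph{line} (the source of the doubling when $n=2\ell$, via $L_{2\pi j/n}=L_{2\pi(j+\ell)/n}$) versus producing the same \emph{half line}. When $n$ is odd this index shift by $\ell$ is unavailable, and instead the second family of half lines acquires the opposite parity, which is exactly why $L(n)$ and $L(n)'$ both swell up to all of $\mathcal L(2n)$.
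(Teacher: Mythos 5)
Your proposal is correct: the reduction to residue classes modulo $2n$ (parity flip of $\overline{2j+n}$ when $n$ is odd, the shift $j\mapsto j+n/2$ when $n$ is even) verifies every claim of the Observation, which the paper states without proof as an easy check. Your bookkeeping is exactly the routine verification the paper leaves implicit, so there is nothing to compare beyond noting that you have supplied the details.
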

We identify  $\mathbb Z/n\mathbb Z$ with complex numbers which are $n$-th root of unity and we 
consider  the canonical  action of $\mathbb Z/n\mathbb Z\subset \mathbb C^*$ on $\mathbb C$
by  multiplication.
Thus it is easy to observe that 
\begin{Lemma}
$V(\ell_{n,m})$
is a subset of $\mathcal L(2n)$ and $V(\ell_{n,m})\cap L(n)$  and $V(\ell_{n,m})\cap L(n)'$ are  stable by the action of $\mathbb Z/n\mathbb Z$.
\end{Lemma}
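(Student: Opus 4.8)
The plan is to analyze the equation $\ell_{n,m}(z,\bar z)=0$ directly through its consequences for $|z|$ and $\arg z$, as already prepared in Proposition \ref{positivityofroot}. The first claim, that $V(\ell_{n,m})\subset \mathcal L(2n)$, is essentially Proposition \ref{positivityofroot}: any non-zero root $z$ satisfies $z^{2n}\in\mathbb R_{\ge 0}$, which is precisely the description $\mathcal L(2n)=\{z\in\mathbb C\mid z^{2n}\ge 0\}$; and the origin lies in $\mathcal L(2n)$ as well (indeed $0\in V(\ell_{n,m})$ since the numerator $z^{n-m}\bar z^m q(z)-z^{n-m}$... — one should check whether $0$ is actually a root of the numerator, but in any case $0\in\mathcal L(2n)$ trivially). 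So the containment statement is immediate from the earlier proposition.

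For the stability claim, I would first recall that $\mathcal L(2n)=L(n)\cup L(n)'$ as noted in the Observation, and that $L(n)=\bigcup_j L_{2\pi j/n}$ is the union of the lines through the $n$-th roots of unity while $L(n)'=\bigcup_j L_{(2j+1)\pi/n}$ is its rotation by $\pi/n$. The key point is that multiplication by a primitive $n$-th root of unity $\zeta$ permutes the half-lines $\mathbb R_+(2\pi j/n)$ among themselves (hence preserves $L(n)$) and likewise permutes the half-lines $\mathbb R_+((2j+1)\pi/n)$ (hence preserves $L(n)'$); it does not mix the two families. So it suffices to show that the root set $V(\ell_{n,m})$ itself is invariant under $z\mapsto \zeta z$; then intersecting the invariant set $V(\ell_{n,m})$ with the invariant sets $L(n)$ and $L(n)'$ yields invariant sets.

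To see $V(\ell_{n,m})$ is $\mathbb Z/n\mathbb Z$-invariant, I would substitute $z\mapsto\zeta z$, $\bar z\mapsto\bar\zeta\bar z$ with $\zeta^n=1$ into
\[
\ell_{n,m}(z,\bar z)=\bar z^m\bigl(z^n-a^n\bigr)-z^{n-m}.
\]
Since $\zeta^n=1$ we get $(\zeta z)^n-a^n=z^n-a^n$, while $\bar\zeta^m\bar z^m$ and $\zeta^{n-m}z^{n-m}$ pick up factors $\bar\zeta^m$ and $\zeta^{n-m}$. Because $|\zeta|=1$ we have $\bar\zeta=\zeta^{-1}$, so $\bar\zeta^m=\zeta^{-m}=\zeta^{n-m}$; hence
\[
\ell_{n,m}(\zeta z,\bar\zeta\bar z)=\zeta^{\,n-m}\bigl(\bar z^m(z^n-a^n)-z^{n-m}\bigr)=\zeta^{\,n-m}\,\ell_{n,m}(z,\bar z).
\]
Thus $\ell_{n,m}(\zeta z,\bar\zeta\bar z)=0$ iff $\ell_{n,m}(z,\bar z)=0$, so $V(\ell_{n,m})$ is stable under the $\mathbb Z/n\mathbb Z$-action, and the two intersections are stable as claimed.

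I do not expect any serious obstacle here: the only mild subtlety is the bookkeeping distinction between $L(n)$ and $L(n)'$ when $n$ is even (where, per the Observation, the half-lines are "doubled"), but since the argument works at the level of the root set $V(\ell_{n,m})$ and the two unions $L(n)$, $L(n)'$ are each individually $\mathbb Z/n\mathbb Z$-invariant, intersecting preserves invariance regardless of parity. One should just be careful that the factor $\zeta^{n-m}$ is genuinely nonzero (clear, since $\zeta\neq 0$) so that multiplication by it is a bijection on the solution set. Everything else is the already-established Proposition \ref{positivityofroot} together with the elementary geometry of the half-lines $\mathbb R_+(j\pi/n)$.
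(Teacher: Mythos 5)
Your proposal is correct and follows essentially the same route as the paper, which treats the lemma as an immediate consequence of Proposition \ref{positivityofroot} together with the $\mathbb Z/n\mathbb Z$-invariance of the defining equation; your explicit check that $\ell_{n,m}(\zeta z,\bar\zeta\bar z)=\zeta^{\,n-m}\ell_{n,m}(z,\bar z)$ for $\zeta^n=1$ just spells out the equivariance the paper leaves as an easy observation. The remarks about $0\in\mathcal L(2n)$ and about $L(n)$, $L(n)'$ being separately preserved are correct and harmless additions.
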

For non-zero real  number solutions of (\ref{lens(n,m)}) are given by the roots of the following equation:
\begin{eqnarray}
\ell_{n,m}(z,\bar z)&=&z^{2m}- \dfrac{z^{n}}{z^n-a^n}=0,\quad z\in \mathbb R^*.
\end{eqnarray}
Equivalently
\[
\begin{cases}
z^n-a^n-z^{n-2m}=0,&\quad n> 2m\\
z^{2m-n}(z^n-a^n)-1=0,&\quad n\le 2m.
\end{cases}
\]
Note that for $n$ odd, $V(\ell_{n,m})\subset L(n)$ and  the generator $e^{2\pi i/n}$ of $\mathbb Z/n\mathbb Z$ acts cyclicly as
\[\begin{split}
& 
L_{2\pi j/n}\cap V(\ell_{n,m})\mapsto L_{2\pi (j+1)/n}\cap V(\ell_{n,m})\\
& 
L_{\pi (2j+1)/n}\cap V(\ell_{n,m})\mapsto L_{\pi (2j+3)/n}\cap V(\ell_{n,m})
\end{split}
\]
 For $n$ even, $V(\ell_{n,m})\subset L(n)\cup L(n)'$.
To consider the  roots on  $L(n)'$, we put 
$z=\exp(\pi(2j+1)i/n)\cdot u$ with $u\in \mathbb R^*$. Then $u$ satisfies
\begin{eqnarray}
u^{2m}-
 \dfrac{-u^n}{-u^n-a^n}=0,\,\,\text{if}\,\, u\in \mathbb R.\label{even n}
\end{eqnarray}
This  is equivalent to
\begin{eqnarray}
\begin{cases}
u^n+a^n-u^{n-2m}=0,&\quad n >2m\\
u^{2m-n}(u^n+a^n)-1=0,&\quad n\le 2m.\label{even n2}
\end{cases}
\end{eqnarray}

\subsection{Preliminary result before a bifurcation}
The first preliminary results is the following (Lemma \ref{ex-lens}, Lemma \ref{ex-lens2}).
\begin{Lemma}\label{ex-lens}
 If $n>2m$,  for a sufficiently small $a>0$,  $\rho(\ell_{n,m})= 3n$.
\end{Lemma}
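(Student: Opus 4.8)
The plan is to count the real roots of the defining equation directly, separating contributions according to whether $z$ lies on $L(n)$ (i.e. $z^n>0$) or on $L(n)'$ (i.e. $z^n<0$), and in each case reduce to a one-variable real polynomial in the modulus. By Proposition \ref{positivityofroot}, every nonzero root $z$ of $\ell_{n,m}=0$ satisfies $z^{2n}>0$, hence lies on $\mathcal L(2n)$; together with the preceding Lemma, $V(\ell_{n,m})\setminus\{0\}$ splits into $\BZ/n\BZ$-orbits lying on $L(n)$ and $L(n)'$. So it suffices to count real positive solutions $r=|z|$ of the two scalar equations: on $L(n)$, using $n>2m$, the equation $r^n-a^n-r^{n-2m}=0$; on $L(n)'$, the equation $u^n+a^n-u^{n-2m}=0$. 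Each such positive solution $r$ produces exactly $n$ genuine roots of $\ell_{n,m}$ (the orbit $\{e^{2\pi ij/n}r\}$ for $L(n)$, and $\{e^{\pi(2j+1)i/n}u\}$ for $L(n)'$). Finally one must add the contribution of the root $z=0$: since $\deg_z\ell_{n,m}=n$ and $\deg_{\bar z}\ell_{n,m}=m$, the monomial structure near $0$ forces $z=0$ to be a root, and I would check it contributes exactly $1$ to $\rho$ (with the appropriate sign/multiplicity count, but here we only count without sign).

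The core of the argument is the asymptotic root count of $g_+(r):=r^n-r^{n-2m}-a^n$ and $g_-(u):=u^n-u^{n-2m}+a^n$ on $r,u>0$ for $a>0$ small. For $g_+$: write $g_+(r)=r^{n-2m}(r^{2m}-1)-a^n$. When $a=0$ this has the positive double structure at $r=1$ (a root) and a root at $r=0$ of order $n-2m$; for small $a>0$, near $r=1$ the factor $r^{2m}-1$ changes sign, so $g_+$ picks up exactly two simple positive roots close to $1$ (one just below, one just above, since $-a^n<0$ shifts the graph down), and near $r=0$ the balance $r^{n-2m}\approx a^n$ gives one more small positive root. A monotonicity/derivative analysis (the derivative $g_+'(r)=nr^{n-1}-(n-2m)r^{n-2m-1}$ vanishes only at $r=((n-2m)/n)^{1/(2m)}<1$, giving a single interior local extremum) confirms there are exactly $3$ positive roots and no others. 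The same analysis applied to $g_-(u)=u^{n-2m}(u^{2m}-1)+a^n$: now the constant shift is $+a^n>0$, which near $u=1$ still produces two roots if the local minimum of $u^{n-2m}(u^{2m}-1)$ dips below $-a^n$ — but that local minimum is a fixed negative number independent of $a$, so for $a$ small it does, yielding two roots near $1$; near $u=0$, however, $u^{n-2m}(u^{2m}-1)\approx -u^{n-2m}<0$ cannot be cancelled by $+a^n>0$ for $u$ small unless $u^{n-2m}\approx a^n$, giving one small root — so again $3$ positive roots. Wait: I must recheck the sign near $0$ for $g_-$; $-u^{n-2m}+a^n=0$ does have the small positive solution $u=a^{n/(n-2m)}$, so indeed $3$ roots. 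Thus $L(n)$ contributes $3n$ roots total? That would already overshoot. So the resolution must be that the ``two roots near $1$'' on one of the two lists merges or fails to be real; I would pin this down by checking that $g_-$ in fact has only the near-$1$ pair OR only isolated roots, so that $L(n)$ and $L(n)'$ together contribute $3n-1$ and $z=0$ adds $1$. The bookkeeping of exactly which family gives $2$ and which gives $1$ near the unit circle, plus the $z=0$ count, is the step that needs care.

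I would organize it as: (i) reduce to the two scalar equations via the orbit structure and Proposition \ref{positivityofroot}; (ii) for each scalar equation, use $g_\pm(r)=r^{n-2m}(r^{2m}-1)\mp(-a^n)$ and a Rolle/derivative argument to count positive roots exactly, treating the regime $0<r\ll1$ and $r\approx 1$ separately and noting $r\gg1$ gives no root since $g_\pm\to+\infty$ monotonically past the last critical point; (iii) verify each positive root is simple for generic small $a$ (so the corresponding roots of $\ell_{n,m}$ are simple mixed roots, as $\rho$ counts simple roots), which follows from $g_\pm$ and $g_\pm'$ having no common zero when $a$ avoids finitely many values; (iv) multiply by $n$ for the orbit size and add the contribution at $z=0$; (v) check no spurious roots on the overlap $L(n)\cap L(n)'=\{0\}$ when $n$ is even (handled by the Observation), and that the odd/even distinction does not change the total since for $n$ odd $L(n)=L(n)'=\mathcal L(2n)$ and the two scalar equations coincide after $u\mapsto -u$ bookkeeping.

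The main obstacle I anticipate is precisely the exact root count near $r=1$ and the reconciliation of the $L(n)$ versus $L(n)'$ contributions so that the grand total comes out to $3n$ and not $3n\pm n$: one of the two scalar equations must have exactly one positive root in the relevant range and the other exactly two (or the near-$1$ pair on one side must be shown to violate the sign constraint forcing $z^n$ of the correct sign). Getting the constant-shift inequalities right — comparing the fixed local extreme value of $r^{n-2m}(r^{2m}-1)$ against $\pm a^n$ for small $a$ — is the delicate point, and the convenience hypothesis ($p_0(0)\ne0$, equivalently the nonvanishing of the constant term, here automatic since $a\ne0$) is what guarantees $z=0$ behaves as a single clean root contributing exactly $1$.
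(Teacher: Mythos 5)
Your overall strategy (split the nonzero roots over the $2n$ half-lines of $\mathcal L(2n)$ according to the sign of $z^n$, reduce to the two scalar equations $g_+(r)=r^n-r^{n-2m}-a^n$ and $g_-(u)=u^n-u^{n-2m}+a^n$ in the modulus, and multiply each positive-root count by $n$) is sound, and it is in fact a cleaner uniform framework than the paper's separate treatment of $n$ odd and $n$ even. But the core step — the exact positive-root counts of $g_\pm$ — is where your argument is wrong, and you explicitly leave it unresolved ("the resolution must be\dots", "the bookkeeping\dots is the step that needs care"). Both of your per-equation counts are incorrect. Write $h(r)=r^{n-2m}(r^{2m}-1)$; since $n>2m$, on $r>0$ the derivative $h'(r)=r^{n-2m-1}\bigl(nr^{2m}-(n-2m)\bigr)$ vanishes only at $r_*=((n-2m)/n)^{1/2m}\in(0,1)$, so $h$ decreases from $0$ to its minimum $-\alpha<0$ at $r_*$ and then increases monotonically through $h(1)=0$ to $+\infty$. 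The roots of $g_+$ are the solutions of $h(r)=+a^n$: for $0<a^n$ this level is met \emph{exactly once}, just above $r=1$ — there is no pair of roots straddling $1$ (that would require the level $-a^n$), and there is no small root near $0$, since $g_+(r)\approx -r^{n-2m}-a^n<0$ there. The roots of $g_-$ are the solutions of $h(u)=-a^n$: for $a^n<\alpha$ this level is met \emph{exactly twice}, once on the decreasing branch (a small root $u\approx a^{n/(n-2m)}$) and once on the increasing branch just below $u=1$ — not three times as you assert. With the correct counts the total is $n\cdot 1+n\cdot 2=3n$ on the nose, which is exactly the reconciliation you were missing; the paper obtains the same numbers by plotting $t^n-t^{n-2m}$ over the whole real line and counting intersections with the level $a^n$ (three intersections for $n$ odd; two plus four, each halved by the $\pm1$ symmetry, for $n$ even).

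Your proposed "$+1$ contribution from $z=0$" is also misguided and would break the count. The origin is a root of the numerator of high multiplicity (near $0$ the numerator behaves like $-a^n\bar z^m - z^{n-m}$), not a simple root; $\rho$ is only defined for simple roots, and the paper's $3n$ counts the nonzero roots only — indeed Section 4 explicitly records that $z=0$ is "a root with multiplicity" and bifurcates it later into $2n$ simple roots to reach $5n$. So the correct bookkeeping is $3n$ from the two scalar equations with no contribution from the origin, whereas your tentative tally ($3n-1$ from the lines plus $1$ from the origin) cannot even arise, since the line contributions are necessarily multiples of $n$. In short: the framework is fine, but the decisive monotonicity analysis of $h$ and the resulting counts $1$ and $2$ (not $3$ and $3$) are missing, and that is precisely the content of the lemma.
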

\begin{proof} The proof is parallel  to  that of Rhie (\cite{Rhie, Bleher}.
We know that roots are on $L(n)$ or $L(n)'$ by Proposition 
\ref{positivityofroot}.
Consider  non-zero real  roots of $\ell_{n,m}(z,\bar z)=0$.
It satisfies the equality:
\begin{eqnarray}\label{n>2m-odd}
  z^m- \dfrac{z^{n-m}}{z^n-a^n}=0\iff  z^n-z^{n-2m}-a^n=0,\quad z\in
   \mathbb R\setminus \{0\}.
\end{eqnarray}
 (1) Assume that $n$ is odd.
Then the function $w=z^n-z^{n-2m}$ has three real points on the real
 axis,
$(-1,0),\,(0,0),\, (1,0)$ and the graph looks like Figure \ref{F2}.  As we see in  the Figure, they have one relative  maximum $\al>0$ and one relative minimum $-\al$.
Thus the horizontal line $w=a^n$ intersects with this graph at three points if $a^n<\al$.
\begin{figure}[htb] 
\setlength{\unitlength}{1bp} 
\centerline{\includegraphics[width=6cm]{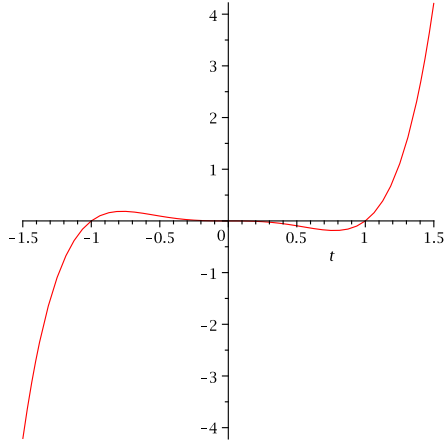}} 
\vspace{1cm}
\caption{Graph of $y=t^n-t^{n-2m}$,\,$n=5,m=1$} 
\label{F2}
\end{figure} 

Thus (\ref{n>2m-odd}) has three real roots for a
 sufficiently small $a$.
Now we consider the action of $\mathbb Z/n\mathbb Z$ on $V(\ell_{n,m})$,  we have $3n$ solutions 
on $V(\ell_{n,m})\cap L(n)$.  

\noindent
(2) Assume that $n$ is even.
In this case, we have to notice that the action of $\mathbb Z/n\mathbb Z$ on $V(f)\cap L(n)$
is  $2:1$ off the origin.

In this case,  the graph of $y=t^n-t^{n-2m}$
looks like Figure 3.
\begin{figure}[htb] 
\setlength{\unitlength}{1bp} 
\centerline{\includegraphics[width=6cm,angle=-90]{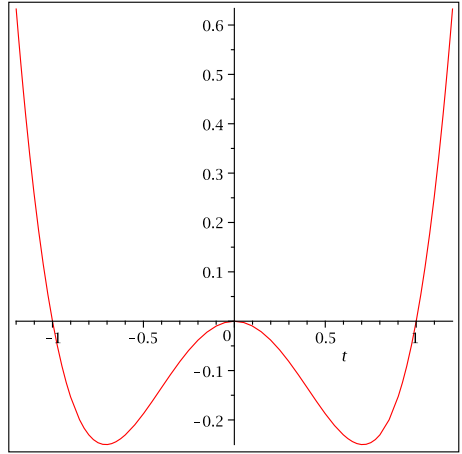}} 
\caption{Graph of $y=t^n-t^{n-2m}$,\,$n=4,m=1$} 
\end{figure} 
Thus for a sufficiently small $a>0$, $t^n-t^{n-2m}-a^n=0$ has two real roots.
Thus by the above remark,  it gives 
$2n/2=n$ roots on  $V(\ell_{n,m})\cap L(n)$.
Now we consider the roots on the line $L_{(2j+1)\pi/n}$,
$j=0,\dots, n-1$. Putting $z=u\zeta, \zeta^n=-1$ with $u$ being real, from (\ref{even n2}), we get 
the equality:
\begin{eqnarray}
 -u^n+u^{n-2m}-a^n=0.
\end{eqnarray}
The graph of $y=-t^n+t^{n-2m}$ is the mirror image of Figure 3 with
 respect to $t$-axis.
Thus $-t^n+t^{n-2m}-a^n=0$ has 4 real roots.
Counting all the roots on the lines in 
$L(n)'$, it gives 4n/2=2n roots.
Thus altogether, we get $3n$ roots.
\end{proof}

Now we consider the case $2m\ge n$.
\begin{Lemma}\label{ex-lens2}
If $2m\ge n>m$, $\rho(\ell_{n,m})=2n$ for a sufficiently small $a>0$.
\end{Lemma}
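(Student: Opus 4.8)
The plan is to mimic the proof of Lemma \ref{ex-lens}, counting nonzero real roots on the lines $L(n)$ and $L(n)'$ and then using the $\mathbb Z/n\mathbb Z$-action, but now working with the equations from the case $n\le 2m$ rather than $n>2m$. By Proposition \ref{positivityofroot}, every nonzero root of $\ell_{n,m}=0$ lies in $\mathcal L(2n)$, so it suffices to analyze roots on the half-lines $\mathbb R_+(2\pi j/n)$ (giving $L(n)$) and $\mathbb R_+((2j+1)\pi/n)$ (giving $L(n)'$). On $L(n)$, a nonzero root corresponds to a real solution $z\in\mathbb R^*$ of
\[
z^{2m-n}(z^n-a^n)-1=0,
\]
while on $L(n)'$, writing $z=u\zeta$ with $\zeta^n=-1$ and $u\in\mathbb R^*$, equation (\ref{even n2}) gives
\[
u^{2m-n}(u^n+a^n)-1=0.
\]

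First I would treat the case $n$ odd. Here $L(n)=L(n)'=\mathcal L(2n)$ and the $\mathbb Z/n\mathbb Z$-action on $V(\ell_{n,m})$ is free off the origin, so the total root count is $n$ times the number of nonzero real roots of $z^{2m-n}(z^n-a^n)=1$. I would analyze the function $\psi(t)=t^{2m-n}(t^n-a^n)=t^{2m}-a^nt^{2m-n}$ on $\mathbb R^*$. As $a\to 0^+$ this degenerates to $t^{2m}$, whose graph meets the line $y=1$ in exactly two points ($t=\pm1$); the perturbation term $-a^n t^{2m-n}$ is small uniformly on compact sets bounded away from $0$, but near $t=0$ it blows up (since $2m-n$ could be negative — indeed $2m-n\ge0$ with equality only if $n=2m$, so for $2m>n$ the exponent $2m-n$ is positive and $\psi$ is continuous at $0$ with $\psi(0)=0$; the case $n=2m$ needs separate care since then $\psi(t)=1-a^n$ is constant and the equation has no isolated roots — but that case is excluded because $2m\ge n$ and $n>m$ forces $n<2m$ when... actually $n=2m$ is allowed, so I must handle it, see below). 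Assuming $2m>n$, $\psi$ is a genuine polynomial of degree $2m$ that is even under $t\mapsto -t$ up to the sign pattern; counting sign changes and using that for small $a$ the roots are close to $\pm1$, I would show $\psi(t)=1$ has exactly two real roots for small $a>0$, giving $2n$ roots of $\ell_{n,m}$.

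For $n$ even, the $\mathbb Z/n\mathbb Z$-action on $V(\ell_{n,m})\cap L(n)$ and on $V(\ell_{n,m})\cap L(n)'$ is $2:1$ off the origin, so each contributes $(\text{number of real roots})\cdot n/2$. On $L(n)$ the relevant equation is $\psi(t)=1$ with $\psi(t)=t^{2m}-a^nt^{2m-n}$ as above, which has $2$ real roots for small $a$, contributing $n$. On $L(n)'$ the equation is $\tilde\psi(u)=1$ with $\tilde\psi(u)=u^{2m}+a^nu^{2m-n}$; since $\tilde\psi\to u^{2m}$ as $a\to0^+$, again there are exactly $2$ real roots for small $a$, contributing another $n$, for a total of $2n$. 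The uniform-convergence/continuity-of-roots argument is identical to Rhie's and to Lemma \ref{ex-lens}, so I would not belabor it. The one genuine obstacle is the boundary case $n=2m$: then $\psi$ and $\tilde\psi$ are constants $1-a^n$ and $1+a^n$, so the equations $\psi=1$, $\tilde\psi=1$ have no solutions at all — meaning $\ell_{n,m}$ has no nonzero real roots on any line. But then by Proposition \ref{positivityofroot} $\ell_{n,m}$ has no nonzero roots, only $z=0$, whose multiplicity as a root one computes directly from $\ell_{n,m}(z,\bar z)=\bar z^m(z^n-a^n)-z^{n-m}$; near $0$ the lowest-degree term is $-z^{n-m}$ (of holomorphic degree $n-m=m$), and a short Milnor-number / local-degree computation should give exactly $2n$ counted appropriately — but since the statement is about $\rho$ (cardinality of $V$, roots assumed simple), I expect the intended reading excludes $n=2m$ or handles it by noting $a$ must be taken so that $z=0$ splits into $2n$ nearby simple roots under a further perturbation; I would flag this and, if necessary, restrict to $2m>n$ or invoke Theorem \ref{main} to pin down $\be(\ell_{n,m})$ and combine with the real-root analysis. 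Thus the main difficulty is not the generic count but checking that the degenerate exponent $2m-n=0$ does not break the argument, and confirming that all $2n$ roots produced are simple for generic small $a$.
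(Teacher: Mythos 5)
Your overall strategy is the same as the paper's: by Proposition \ref{positivityofroot} all nonzero roots lie on $\mathcal L(2n)$, one counts real solutions of $z^{2m-n}(z^n-a^n)=1$ on $L(n)$ and of $u^{2m-n}(u^n+a^n)=1$ on $L(n)'$, and then multiplies by $n$ (for $n$ odd) or by $n/2$ on each family (for $n$ even). For $2m>n$ your count (two real roots of each equation for small $a$) agrees with the paper, and your continuity argument as $a\to 0^+$ is an acceptable substitute for the paper's graph inspection.

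The genuine gap is your treatment of the boundary case $n=2m$, which the lemma does include. You claim that then $\psi$ and $\tilde\psi$ become the constants $1-a^n$ and $1+a^n$, conclude there are no nonzero roots, and then drift into a multiplicity/Milnor-number discussion at $z=0$ and a suggestion to exclude the case or perturb further. This is simply a miscomputation: for $n=2m$ one has $\psi(t)=t^{2m-n}(t^n-a^n)=t^{2m}-a^{2m}$ and $\tilde\psi(u)=u^{2m}+a^{2m}$, which are not constant. The equations $\psi(t)=1$ and $\tilde\psi(u)=1$ read $t^{2m}=1+a^{2m}$ and $u^{2m}=1-a^{2m}$, each having exactly two real roots (the latter provided $a<1$). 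Since $n=2m$ is even, these give $2\cdot\frac n2=n$ roots on $L(n)$ and another $n$ on $L(n)'$, i.e.\ $2n$ in total, exactly as in the paper's case (2). So no restriction to $2m>n$, no appeal to the degenerate root at the origin (which is not counted in $\rho$ here and is only dealt with later via the bifurcation $\ell_{n,m}^\eps$), and no auxiliary perturbation is needed; your proposed remedies would not prove the lemma as stated, whereas the corrected computation does.
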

\begin{proof}

\noindent
(1a)  Assume that $2m>n$ and $n$ is odd.
The equation of the real solutions of (\ref{lens(n,m)} ) reduces to 
\begin{eqnarray}\label{2m>n}
z^{2m-n}(z^n-a^n)=1.
\end{eqnarray}
It is easy to see that there are two real solutions (one positive and one negative). See Figure 4.
\begin{figure}[htb] 
\setlength{\unitlength}{1bp} 
\centerline{\includegraphics[width=4cm,height=6cm, angle=-90]{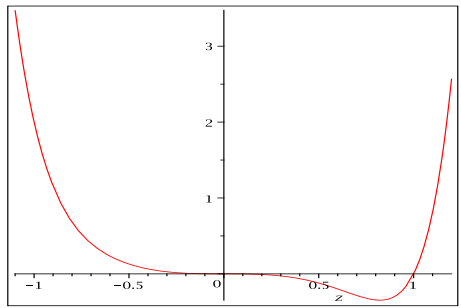}} 
\vspace{.6cm}
\caption{Graph of $y=z^{2m-n}(z^n-a^n)$,\,$n=5,m=4,a=1$} 
\end{figure}
Considering other solutons of the argument $2\pi j/n$, $j=0,\dots, n-1$, we get $2n$ solutions.


\noindent
(1b) Assume that $2m>n$ and $n$ is even.
The equation for the real solutions is 
\[
z^{2m-n}(z^n-a^n)=1
\]
and it has two real solutions. Thus on the lines $L_{2j\pi/n}$,  $2n/2=n$ solutions.
See Figure 5.
On the real lines $L_{(2j+1)\pi/n}$,  the equation reduces to
\[
u^{2m-n}(u^n+a^n)=1.
\]
Thus it has $2n/2=n$ solutions on these lines and altogether, we gave $2n$ solutions.
\begin{figure}[htb] 
\setlength{\unitlength}{1bp} 
\centerline{\includegraphics[width=6cm,height=4cm, angle=0]{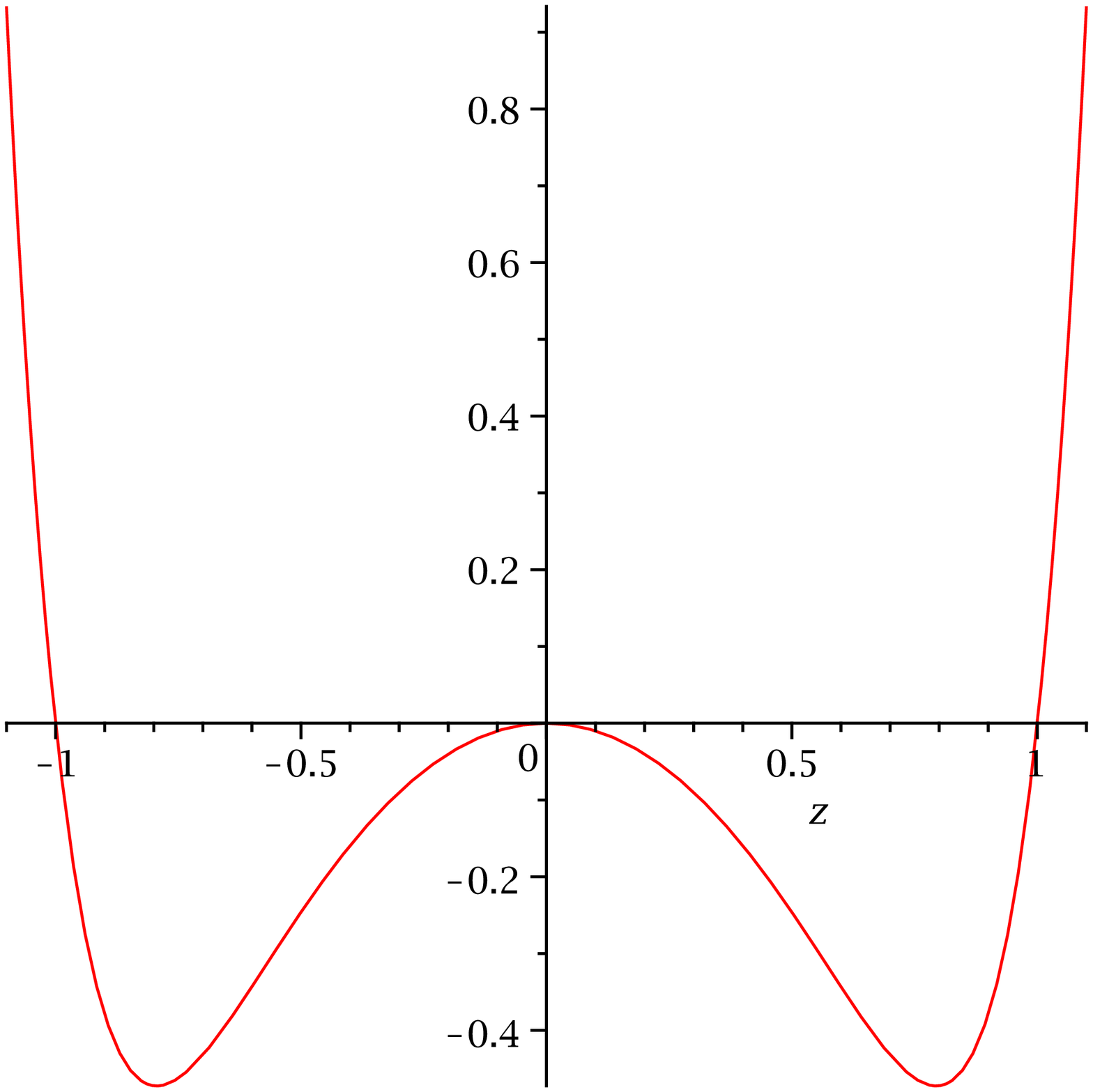}} 
\vspace{1cm}
\caption{Graph of $y=z^{2m-n}(z^n-a^n)$,\,$n=6,m=4,a=1$} 
\end{figure}
\begin{figure}[htb] 
\setlength{\unitlength}{1bp} 
\centerline{\includegraphics[width=6cm,height=4cm,angle=0]{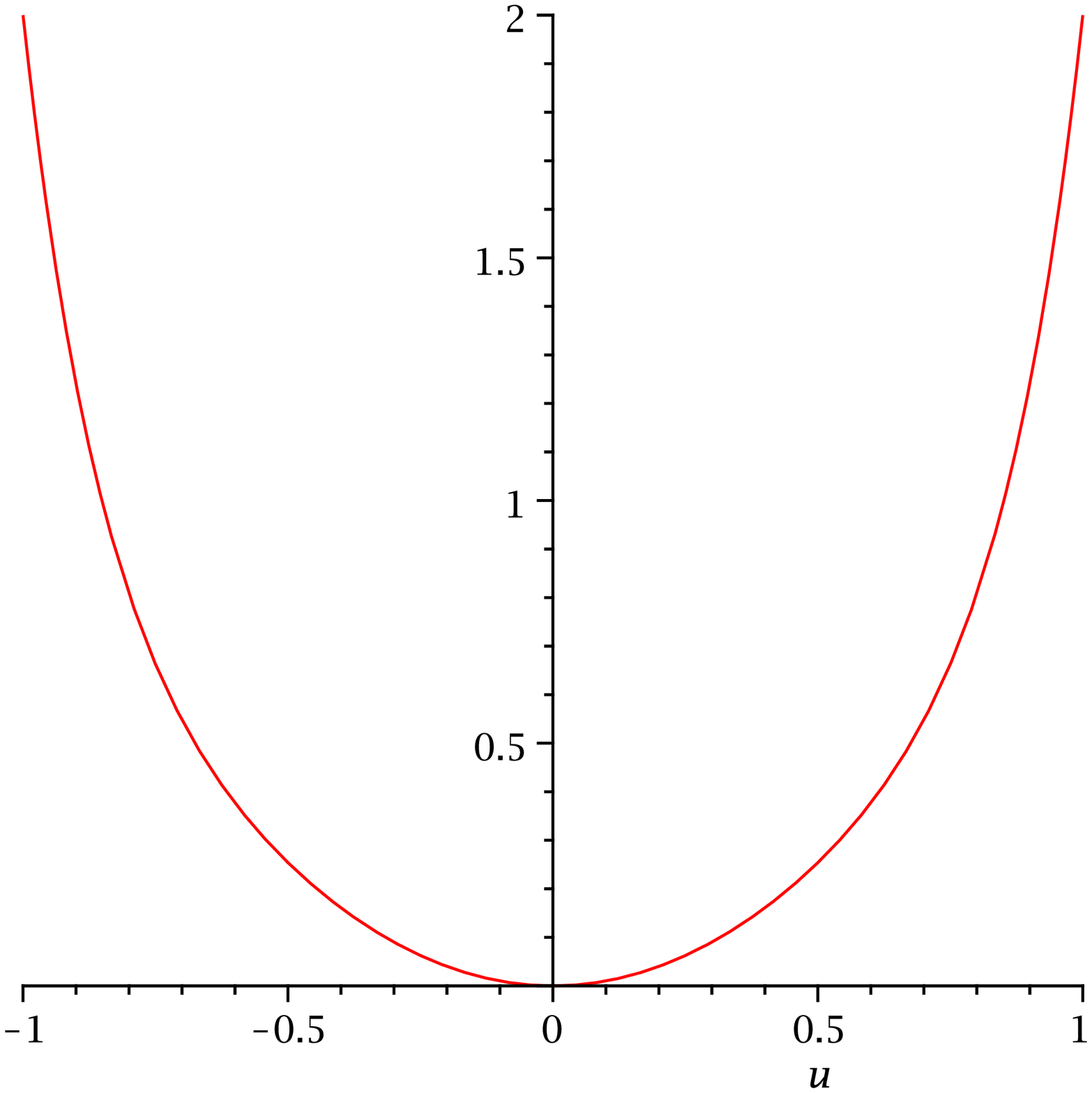}} 
\vspace{1cm}
\caption{Graph of $y=z^{2m-n}(z^n+a^n)$,\,$n=6,m=4,a=1$} 
\end{figure}
\noindent

(2) Assume that $n=2m$. Then (\ref{2m>n} ) reduces to
\[
z^{2m}-a^{2m}=1.
\]
This has two real roots on $L_0$ and thus 
 we get $2n/2=n$ roots on the lines $L(n)$.
On the lines $\arg\,z=(2j+1)\pi/n$, putting $z=u \exp(\pi/n)$,
the equation is given by  $u^n+a^n=1$. This has two roots provided $a<1$ and thus $n$ roots on $L(n)'$.
Thus altogether, we get $2n$ roots.
\end{proof}

\section{Bifurcation of the root  and the main result}
We considered  the extended  lens equation for a fixed $a>0$ as in Lemma \ref{ex-lens}. Note that $z=0$ is a root with multiplicity. 
We want to change these roots into  $2n$ regular roots using 
a small  bifurcation.
\begin{eqnarray}
&\ell_{n,m}^\eps&:=\bar z^m-\dfrac{z^{n-m}}{z^n-a^n}+\dfrac{\eps}{z^m},\,\,\eps>0.\label{bif1}
\end{eqnarray}
Note that the  mixed polynomial, given by the numerator of $\ell_{n,m}^\eps$ (by abuse of the notation, we denote this numerator also by the same notation) satisfies  
\begin{eqnarray*}
&\ell_{n,m}^\eps\in L(n_1+m;n_1,m)\subset M(n_1+m;n_1,m)\\
&\text{where}\,\, \qquad n_1:=n+m.
\end{eqnarray*}
First we observe (\ref{bif1}) implies
\[
z^n(|z|^{2m}-1+\eps)=\eps a^n+|z|^{2m}a^n
\]
which implies that
$z^{2n}$ is a positive real number as the situation before the  bifurcation.
We observe that
\begin{Proposition}
 $V(\ell_{n,m}^\eps)$ is also a subset of $\mathcal L(2n)$ and it is  $\mathbb Z/n\mathbb Z$-invariant.
\end{Proposition}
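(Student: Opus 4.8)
The plan is to mirror, for the perturbed equation~(\ref{bif1}), the two facts already established for $V(\ell_{n,m})$ in Proposition~\ref{positivityofroot} and the lemma preceding it. There are two independent claims: that every root lies in $\mathcal L(2n)$, and that the root set is stable under multiplication by $n$-th roots of unity. Both are short.

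For the inclusion $V(\ell_{n,m}^\eps)\subset\mathcal L(2n)$ I would start from the identity displayed just before the statement, $z^n(|z|^{2m}-1+\eps)=\eps a^n+|z|^{2m}a^n$, which comes from clearing the denominators $z^m$ and $z^n-a^n$ in $\ell_{n,m}^\eps=0$ and writing $z\bar z=|z|^2$. First note $z=0$ is not a root: the numerator of $\ell_{n,m}^\eps$, namely $z^m(z^n-a^n)\bar z^m-z^n+\eps(z^n-a^n)$, takes the nonzero value $-\eps a^n$ at the origin. So assume $z\neq 0$. Since $a>0$ and $\eps>0$, the right-hand side of the identity is a strictly positive real number; hence the left-hand side is nonzero, the real factor $|z|^{2m}-1+\eps$ cannot vanish, and $z^n=(\eps+|z|^{2m})a^n/(|z|^{2m}-1+\eps)$ is a nonzero real number. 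Therefore $z^{2n}=(z^n)^2>0$, i.e.\ $z\in\mathcal L(2n)$ by the very definition of that set.

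For the $\mathbb Z/n\mathbb Z$-invariance I would establish the functional equation $\ell_{n,m}^\eps(\zeta z,\overline{\zeta z})=\zeta^{-m}\,\ell_{n,m}^\eps(z,\bar z)$ for any $\zeta$ with $\zeta^n=1$: since $|\zeta|=1$ one has $\overline{(\zeta z)^m}=\zeta^{-m}\bar z^m$, and $\zeta^n=1$ gives $(\zeta z)^{n-m}/((\zeta z)^n-a^n)=\zeta^{-m}z^{n-m}/(z^n-a^n)$ and $\eps/(\zeta z)^m=\zeta^{-m}\eps/z^m$; adding the three terms produces the common factor $\zeta^{-m}$. As $\zeta^{-m}\neq 0$, it follows that $z$ is a root if and only if $\zeta z$ is, so $V(\ell_{n,m}^\eps)$ is $\mathbb Z/n\mathbb Z$-invariant; the same substitution multiplies the numerator only by a nonzero monomial, so the statement holds verbatim for the associated mixed polynomial. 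No step is a real obstacle here; the only care needed is the bookkeeping with the two denominators $z^m$ and $z^n-a^n$, which is why I separate off the check that $z=0$ is not a root before dividing.
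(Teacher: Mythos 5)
Your proof is correct and follows essentially the same route as the paper, which deduces the inclusion from the displayed identity $z^n(|z|^{2m}-1+\eps)=\eps a^n+|z|^{2m}a^n$ (forcing $z^n$ real and $z^{2n}>0$) and treats the $\mathbb Z/n\mathbb Z$-invariance as immediate from the form of the equation. Your extra care (checking $z=0$ is not a root and writing out the functional equation $\ell_{n,m}^\eps(\zeta z,\overline{\zeta z})=\zeta^{-m}\ell_{n,m}^\eps(z,\bar z)$) only makes explicit what the paper leaves as an observation.
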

\subsection{The case $m$ is not so big} Assume that  $n>2m$ or $n_1>3m$.
The following is our main result which generalize the result of Rhie for the case $m=1$.
\begin{Theorem}\label{main1} 
\begin{enumerate}
\item Assume that $n>2m$ i.e.,  $n_1>3m$.
For a sufficiently small positive   $\eps$, $\rho(\ell_{n,m}^\eps)=5(n_1-m)$.
\item
For the case $n=2m$,  let $f_{2m}$ be as in Corollary \ref{case-n=2m}.
Then  $f_{2m} \in L(3m;2m,m)$ and $\rho(f_{2m})=5m$.
\end{enumerate}
\end{Theorem}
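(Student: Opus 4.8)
Part (1). The plan is to follow the proof of Lemma~\ref{ex-lens} (Rhie's argument), now tracking the effect of the extra term $\eps/z^m$. By the Proposition preceding the theorem, $V(\ell_{n,m}^\eps)\subset\mathcal L(2n)$ and $V(\ell_{n,m}^\eps)$ is $\mathbb Z/n\mathbb Z$-invariant, so it suffices to count the roots lying on one line of $L(n)$ and on one line of $L(n)'$ and to multiply by $n$ (with the usual doubling of $L(n),L(n)'$ when $n$ is even). Parametrising such a line by a real coordinate $s$ and clearing the denominator $z^m(z^n-a^n)$, the equation $\ell_{n,m}^\eps=0$ becomes a real polynomial equation $\Phi_\eps(s)=0$ of degree $n+2m=n_1+m$ with $\Phi_0(s)=s^{2m}\bigl(s^{n}-s^{n-2m}-a^{n}\bigr)$ on $L(n)$, and the analogous equation with $s^{n}+a^{n}$ in place of $s^{n}-a^{n}$ on $L(n)'$. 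Hence $\rho(\ell_{n,m}^\eps)=n\,(r^{+}+r^{-})$, where $r^{\pm}$ is the number of positive real roots of the two equations; since $n_1-m=n$, the assertion is that $r^{+}+r^{-}=5$.

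Part (1), the core. For $\eps=0$, Lemma~\ref{ex-lens} gives $r^{+}+r^{-}=3$ (the $3n$ simple roots of $\ell_{n,m}$), together with a root of multiplicity $2m$ at the origin in each of the two polynomials, coming from the degenerate zero of $\ell_{n,m}$ there. The bifurcation makes $0$ a pole of $\ell_{n,m}^\eps$, forcing this degenerate root to break up, and near $s=0$ the equation is governed by the balance of its constant term ($\asymp\eps$) with the term $\propto s^{2m}$; thus the $2m$ roots leaving the origin behave like $s^{2m}\propto\eps$ to leading order, so exactly one of them is a positive real number for each of the two polynomials. The plan is therefore: (i) the $3n$ roots of $\ell_{n,m}$, being transverse, persist for all small $\eps$; (ii) for a suitably chosen small $\eps$ (in terms of the fixed small $a$), $r^{+}$ and $r^{-}$ each acquire exactly one new, simple, positive root near $s\asymp\eps^{1/2m}$; (iii) there are no further roots. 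Step (ii) is the substance: writing $\Phi_\eps'(s)=s^{2m-1}\Psi_\eps(s)$ (derivative in $s$), one checks that $\Psi_\eps$ has a single positive zero, deduces that $\Phi_\eps$ restricted to $(0,\infty)$ has a single (negative) local minimum before rising monotonically to $+\infty$, and reads off exactly one new positive crossing; the analogous (slightly different) computation handles $L(n)'$, and the even case is carried out by the same analysis plus the $L(n)/L(n)'$ bookkeeping of Lemma~\ref{ex-lens}. As all roots so obtained are simple, $\rho(\ell_{n,m}^\eps)=3n+2n=5n=5(n_1-m)$.

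Part (2). For $n=2m$ this is Corollary~\ref{case-n=2m}. The polynomial $f(z,\bar z)=\bar z-\dfrac{z-1/30}{z^{2}-1/2}$ of Example~\ref{degree2} (the Bleher--Homma--Ji--Roeder example) has numerator $\bar z\bigl(z^{2}-\tfrac12\bigr)-\bigl(z-\tfrac1{30}\bigr)$, which does not vanish at $z=0$, so $\rho(f)=5$. Since $f_{2m}(z,\bar z)=f(z^{m},\bar z^{m})$ and $0\notin V(f_{2m})$, the map $z\mapsto z^{m}$ — a local biholomorphism away from the origin — gives an $m$-to-$1$ surjection $V(f_{2m})\to V(f)$, whence all roots of $f_{2m}$ are simple and $\rho(f_{2m})=m\,\rho(f)=5m$. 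Finally, clearing the denominator, the numerator of $f_{2m}$ equals $\bar z^{m}\bigl(z^{2m}-\tfrac12\bigr)-\bigl(z^{m}-\tfrac1{30}\bigr)=\bar z^{m}q(z)-p(z)$ with $\deg q=2m$ and $\deg p=m<2m$, so $f_{2m}\in L(3m;2m,m)$.

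The hard part is step (1)(ii): exhibiting an explicit range of $\eps$ — depending on the fixed small $a$ — for which the $2m$ roots emanating from the origin each contribute exactly one new positive real root, so that the bifurcation produces precisely $2n$ new simple roots rather than fewer, while simultaneously controlling the three disparate scales $|z|\asymp 1$, $|z|\asymp a^{\,n/(n-2m)}$ and $|z|\asymp\eps^{1/2m}$ at which the roots sit, and keeping track of the even/odd dichotomy for $L(n)$ versus $L(n)'$. The remaining ingredients — the symmetry reduction and the graph analyses — are routine, as in Lemma~\ref{ex-lens}.
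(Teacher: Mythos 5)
Your proposal is correct in substance and keeps the paper's skeleton: confinement of $V(\ell_{n,m}^{\eps})$ to $\mathcal L(2n)$, $\mathbb Z/n\mathbb Z$-equivariance, persistence of the $3n$ simple roots of $\ell_{n,m}$, and $2n$ new roots born from the origin at the scale $|z|\asymp \eps^{1/2m}$, with part (2) reduced, exactly as in the paper, to Corollary \ref{case-n=2m} via the $m$-to-one correspondence $z\mapsto z^m$. Where you genuinely diverge is the key counting step. The paper argues locally in the plane: a Jacobian argument on disjoint disks around the $3n$ old roots, plus a Rouch\'e-type estimate (equivalently the rescaling $z=w\eps^{1/2m}$) producing the two bifurcating roots on the real axis, the rest by symmetry; that no further roots appear is asserted rather than proved. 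You instead reduce to the two real one-variable polynomials on the orbit-representative half-lines and count their positive roots exactly through the factorization $\Phi_\eps'(s)=s^{2m-1}\Psi_\eps(s)$: since $\Psi_\eps(0)<0$ and $\Psi_\eps$ first decreases then increases, $\Phi_\eps$ has a unique interior critical point on $(0,\infty)$ and hence exactly two positive roots. This buys the upper bound (exactness of $5(n_1-m)$) much more transparently than the paper's write-up, at the cost of an $\eps$-dependent graph analysis instead of one only at $\eps=0$.

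Two places must be written out before this is complete. First, on the $L(n)'$ half-line the profile is \emph{not} a single negative minimum: there the constant term is negative while the factored derivative bracket is positive at $0$ (it equals $2ma^n$), so that polynomial increases, decreases, then increases, and you must check that its local maximum is positive and its local minimum negative — this is precisely where the separation of your three scales $\eps^{1/2m}\ll a^{n/(n-2m)}\ll 1$ is used — before concluding exactly three positive roots there and $r^{+}+r^{-}=2+3=5$. You flag this case as ``slightly different'', but it is where the substance of step (1)(ii) lies, for both parities of $n$. Second, your near-origin balance $s^{2m}\propto+\eps$ presupposes that the constant term of the numerator is $+\eps a^n$, i.e. the perturbation $-\eps/z^m$ used in the paper's proof and in its Rhie-equation subsection; with the sign $+\eps/z^m$ as displayed in (\ref{bif1}) the balance becomes $|z|^{2m}=-\eps$ and no real roots bifurcate from the origin, so the $5(n_1-m)$ count would fail. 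State explicitly which sign you take, since your constant-term argument depends on it.
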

\begin{proof}
We prove the assertion for the case $n>2m$, the assertion for $n=2m$ is in Corollary \ref{case-n=2m}.
First observe that  $3n$ roots of $\ell_{n,m}$ are all simple.
 Put them 
$\xi_1,\dots, \xi_{3n}$. Take a small radius $r$ so that the disks $D_r(\xi_j),\,j=1,\dots, 3n$  of radius $r$ centered at $\xi_j$
are disjoint each other and  they do not contain  $0$
and the Jacobian of $\Re \ell_{n,m},\Im\,\ell_{n,m}$ has rank two everywhere on $D_r(\xi_j)$.
Then for any sufficiently small $\eps>$, there exists a single simple root in $D_r(\xi_j)$ for $\ell_{n,m}^\eps=0$.

First consider the case $n$ being odd. The real root of $\ell_{n,m}^\eps=0$ satisfies the equation
\begin{eqnarray}
&{\bar z}^m= \dfrac{z^{n-m}}{z^n-a^n}+ \dfrac{\eps}{z^m}\quad
\text{or}\\
&f_\eps:=|z|^{2m}(z^n-a^n)-z^n-\eps(z^n-a^n)=0. \label{action-inv0}
\end{eqnarray}

We consider the possible roots which bifurcate from $z=0$.
The second equation (\ref{action-inv0}) is written as 
\begin{eqnarray}\label{action-inv1}
z^{n} |z|^{2m} -z^n(1+\eps) -a^{n}(|z|^{2m}-\eps)=0
\end{eqnarray}
$(z^{2m}-\eps)=0$ has two real roots $\al_{0+}>0>\al_{0-}$.
Take a sufficiently small $s>0$  and consider the disk
$B_{0\pm}$ centered at $\al_{0\pm}$ of radius $s\eps^{1/2m}$  so that 
they do not contain  zero.

Note that $|z^{2m}-\al_{0\pm}|\ge s^{2m}\eps$ on 
$\partial B_{0\pm}$. As the other term of $f_\eps$ is of  order greater than or equal to 
$\eps^{n/2m}\ll \eps$. Thus taking $\eps$ small enough, we may assume that 
$f_\eps=0$ has a simple root inside the disk $B_{0+}$ and $B_{0-}$.

Here is another slightly  better argument.
We consider the scale change
$z=w \eps^{1/2m}$
and
put 
\[\begin{split}
\tilde f_\eps(w):=&\frac1{\eps} f(w \eps^{1/2m})\\
=& -a^n(|w|^{2m}-1)+\eps^{n/2m}w^n|w|^{2m}-\eps^{(n-2m)/2m}w^n(1+\eps).
\end{split}
\]
In this coordinate, $3n$ roots $\xi_j$ are far from the origin and we see clearly there are two roots
near $w=\pm1$ as long as $\eps$ is sufficientlt small.

We consider now roots on $L(n)$. By the $\mathbb Z/n\mathbb Z$-invariance, we have also
two  roots on each $L_{2\pi j/n}$ and thus we get $2n$ simple roots which are bifurcating from $z=0$. Thus altogether, we get $5n=5(n_1-m)$ roots.

We consider now the case $n$ being even.  Then every root of (\ref{action-inv1})  on 
$\arg\, z=2j\pi/n$ are counted twice. Thus we have $n$ roots on these real line.
In this case, there are also roots on the real lines $\arg\,z=(2j+1)\pi/n$.
In fact, put $z=u\exp\pi i/n$ in (\ref{action-inv1}).
Then the equation in $u$ takes the form:
\begin{eqnarray}\label{action-inv2}
-u^{n} |u|^{2m} +u^n(1+\eps) -a^{n}(|u|^{2m}-\eps)=0
\end{eqnarray}
This has two real roots.
Thus we found another $2n/2=n$ roots. Therefore there are $2n$ simple roots which bifurcate from $z=0$.
Thus we have $5n$ roots for $\ell_{n,m}^\eps$ in any case.
\end{proof}
\subsection{Rhie's equations}
Applying Theorem \ref{main1}, we get lens equation with maximal number of zeros
$5(n-1)$ in the form:
\[
\bar z=f_n(z),\quad f_n(z)=\frac{z^{n-2}}{z^{n-1}-a^{n-1}}-\frac{ \eps}z,\,0<\eps\ll a\ll 1
\]for $n\ge 4$.
For example, for $n=4$, we can take for example
\[
f_4(z)=\frac{z^2}{z^3-1/5}-\frac{1/800}{z}.
\]
For $n=2, 3$, the previous construction does not work and  we need a special care.
In fact, we can take $f_n$ for $n=2,3$ as follows (Compare with \cite{Bleher}):
\[
f_2(z)=\frac{z-1/30}{z^2-1/2},\quad f_3(z)=\frac{z^2-1/1000}{z^3-1/8}.
\]
\begin{figure}[htb] 
\setlength{\unitlength}{1bp} 
\vspace{.5cm}
\centerline{\includegraphics[width=8cm,height=8cm, angle=-90]{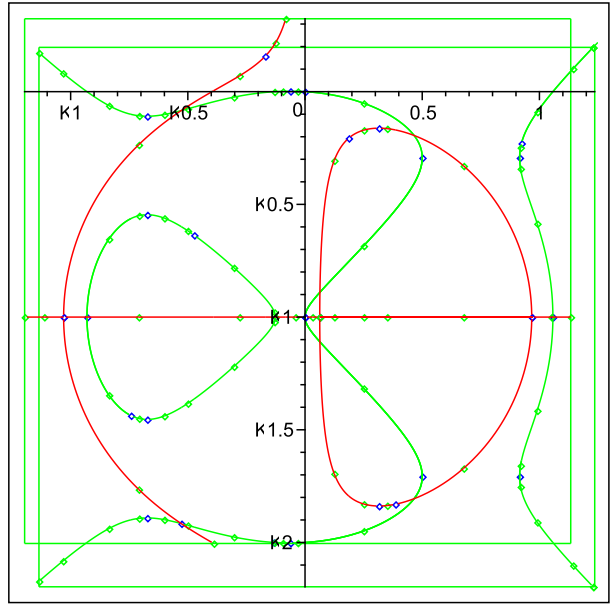}} 
\vspace{.5cm}
\caption{Graph of $f_3$} 
\end{figure}

In Figure 7,  the red curve is $\Im(numerator(\bar z-f_3(z))=0$ and the green curve is the zero set of 
$\Re(numerator(\bar z-f_3(z))=0$. The 10 intersections of green and red curves  are zeros of $\bar z-f_3(z)=0$. Graph is lifted -1 vertically.
\subsection{The case $m$ is big} Assume that $2m\ge  n>m$.
In this case, we have the following result.
\begin{Theorem}Assume that $2m\ge n>m$. Then for a sufficiently small $\eps>0$, 
$\rho(\ell_{n,m}^\eps)\ge 3n=3(n_1-m)$.
\end{Theorem}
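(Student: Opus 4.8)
The plan is to exploit the structural restriction on $V(\ell_{n,m}^\eps)$ already recorded in the excerpt and reduce the count to two one-variable real equations, one per parity of the $2n$ half-lines composing $\mathcal L(2n)$. After clearing the denominator $z^m(z^n-a^n)$, the numerator of $\ell_{n,m}^\eps$ is the mixed polynomial
\[
P_\eps(z,\bar z)=z^n\bigl(|z|^{2m}+\eps-1\bigr)-a^n\bigl(|z|^{2m}+\eps\bigr);
\]
for $z\ne 0$ it vanishes precisely when $\ell_{n,m}^\eps=0$, while $P_\eps(0)=-a^n\eps\ne0$, so the multiple root at the origin has disappeared. Since $a^n(|z|^{2m}+\eps)$ is a positive real number, at any root the quantity $z^n(|z|^{2m}+\eps-1)$ is real and positive; hence $|z|^{2m}+\eps-1\ne0$ and $z^n\in\BR$, i.e. $z\in\mathcal L(2n)$. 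Writing $z=t\,e^{ij\pi/n}$ with $t>0$ and $0\le j\le 2n-1$, so that $z^n=(-1)^jt^n$, the equation $P_\eps=0$ becomes $(-1)^jt^n(t^{2m}+\eps-1)=a^n(t^{2m}+\eps)$, which depends only on the parity of $j$; there are $n$ even and $n$ odd half-lines. On an even half-line $h_e(t):=t^n(t^{2m}+\eps-1)-a^n(t^{2m}+\eps)$ satisfies $h_e(0)=-a^n\eps<0$ and $h_e(t)\to+\infty$ as $t\to+\infty$, so it has a positive root; each of the $n$ even half-lines thus carries $\ge1$ nonzero root of $\ell_{n,m}^\eps$.

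The main work is on the odd half-lines, where one must solve $h_o(t):=t^n(1-\eps-t^{2m})-a^n(t^{2m}+\eps)=0$. Here $h_o(0)=-a^n\eps<0$ and, as the leading term $-t^{n+2m}$ dominates, $h_o(t)\to-\infty$ as $t\to+\infty$, so $h_o$ has the same sign at both ends; I must show it attains a positive value, producing two sign changes and hence $\ge2$ positive roots. Fix $a>0$ small (as in Lemma \ref{ex-lens2}) and rescale $t=s\eps^{1/n}$. Since $2m\ge n$, one has $t^{2m}=s^{2m}\eps^{2m/n}$ with $2m/n\ge1$, so this term is negligible beside $t^n=s^n\eps$, and a routine computation yields, uniformly on compact subsets of $(0,\infty)$,
\[
\eps^{-1}h_o\bigl(s\eps^{1/n}\bigr)\;\longrightarrow\;\psi(s)\qquad(\eps\to0^+),
\]
where $\psi(s)=s^n-a^n$ if $2m>n$ and $\psi(s)=(1-a^n)s^n-a^n$ if $2m=n$. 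In both cases $\psi$ is strictly increasing with $\psi(a/2)<0<\psi(2a)$ (the second inequality using that $a$ is small). Consequently, for $\eps$ small, $h_o\bigl((a/2)\eps^{1/n}\bigr)<0<h_o\bigl(2a\eps^{1/n}\bigr)$, giving a root $t_1\in\bigl((a/2)\eps^{1/n},\,2a\eps^{1/n}\bigr)$; and since $h_o(T)<0$ for a fixed large $T$ and all small $\eps$, there is a second root $t_2>2a\eps^{1/n}$. So each of the $n$ odd half-lines carries $\ge2$ positive roots.

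Each pair $(\text{half-line},\,t)$ with $h_e=0$ or $h_o=0$ gives a distinct nonzero root of $\ell_{n,m}^\eps$, so we obtain $\ge n\cdot 1+n\cdot 2=3n=3(n_1-m)$ distinct roots, which proves the theorem. This can be sharpened: Descartes' rule of signs applied to $h_e$ and $h_o$ shows these counts are exactly $1$ and $2$, so the roots are simple on their half-lines, and a short check that $|\partial_zP_\eps|\ne|\partial_{\bar z}P_\eps|$ at each of them shows they are simple zeros of the mixed polynomial, whence in fact $\rho(\ell_{n,m}^\eps)=3n$. The one genuinely delicate step is the odd-half-line estimate — setting up the rescaling $t=s\eps^{1/n}$ and checking the limit $\psi$ uniformly across the whole range $2m\ge n>m$, where the cases $2m>n$ and $2m=n$ give slightly different limits; everything else is bookkeeping with the intermediate value theorem.
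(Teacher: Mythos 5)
Your argument is correct, and it is organized differently from the paper's. The paper proves the bound in two separate steps: it keeps the $2n$ simple roots of the unperturbed $\ell_{n,m}$ (the preliminary lemma giving $\rho(\ell_{n,m})=2n$) by stability of simple zeros under small perturbation, and then runs a separate bifurcation analysis at the origin, locating $n$ new roots near the solutions of $-z^n(1+\eps)+a^n\eps=0$, with a case split according to the parity of $n$ and according to $2m>n$ versus $n=2m$ (and, incidentally, using the sign convention $-\eps/z^m$ in its displayed equations, whereas the definition of $\ell_{n,m}^\eps$ in the bifurcation formula has $+\eps/z^m$; you follow the latter, which is why your extra roots sit on the odd half-lines, where $z^n<0$, rather than on the even ones --- the count is unaffected). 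You instead make no appeal to persistence of the old roots: after checking $V(\ell_{n,m}^\eps)\subset\mathcal L(2n)$ you reduce everything to the two real functions $h_e,h_o$ on the $2n$ half-lines, get one sign change on each even half-line and two on each odd one (the near-origin sign change coming from the rescaling $t=s\eps^{1/n}$, with the two limits $\psi(s)=s^n-a^n$ and $(1-a^n)s^n-a^n$), and conclude by the intermediate value theorem. What this buys is a proof that is uniform in the parity of $n$ (the half-line parametrization absorbs the paper's ``doubled lines'' bookkeeping), makes explicit the ``similar argument as in the previous section'' that the paper only gestures at, and, via your Descartes-rule aside, even pins down the exact count $3n$ of points of $V(\ell_{n,m}^\eps)$; the paper's route, on the other hand, exhibits the $3n$ roots as the $2n$ old simple roots plus $n$ bifurcating ones, which is the structural picture the section is built around. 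Two small points: your inequality $\psi(2a)>0$ in the case $n=2m$ needs $(1-a^n)2^n>1$, i.e.\ the standing smallness assumption on $a$ (which you flag), and the final claim that the zeros are simple as mixed zeros is asserted rather than proved --- but neither is needed for the stated inequality $\rho(\ell_{n,m}^\eps)\ge 3n$.
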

\begin{proof}
 We have shown in Lemma \ref{ex-lens} that $L_{n,m}$ has $2n$ simple
 roots.
Thus we need to show under the bifurcation equation $\ell_{n,m}^\eps$, we
 get $n$ further roots.
\[
 {\bar z}^m- \dfrac{z^{n-m}}{z^n-a^n}- \dfrac{\eps}{z^m}=0
\]
is equivalent to
\begin{eqnarray}\label{ex-lens2}
& |z|^{2m}(z^n-a^n)-z^n-\eps(z^n-a^n)&=0\,\,\text{or}\\
&|z|^{2m}(z^n-a^n)-z^n(1+\eps)+a^n\eps&=0\label{ex-lens3}
\end{eqnarray} 
(I-1) We first consider the case
 $2m>n$ and $n$ is odd.
 $-z^n(1+\eps)+a^n\eps=0$ has one positive root $z=\be=a\root{n}\of{\eps/(1+\eps)}$.
 By a similar argument as in the previous section,
(\ref{ex-lens3}) has a simple root near $\be$.
Thus by the $\mathbb Z/n\mathbb Z$-action stability, we have $n$ simple bifurcating roots and
altogether, we have $2n+n=3n$ simple roots.

\noindent
(I-2) Assume that $2m>n$ and $n$ is even.
$-z^n(1+\eps)+a^n\eps=0$ has one positive and one negative roots.
Then by the stability (\ref{ex-lens3}) has $2n/2=n$ simple roots.
To see the roots on $L(n)'$, put $z=u\exp(i\pi/n)$.
Then (\ref{ex-lens3}) is reduced to
\[|u|^{2m}(-u^n-a^n)+u^n(1+\eps)+a^n\eps=0
\]
We see this has no real root. Thus altogteher, we have $3(n_1-m)$ roots.

\noindent
(II) Assume that $n=2m$. Then (\ref{ex-lens3}) can be written as
\begin{eqnarray}\label{n=2m2}
 |z|^{2m}z^{2m}  -z^{2m}(a^{2m}+1+\eps)+a^{2m} \eps=0
\end{eqnarray}
if $z$ real. This has two real roots and adding all roots in the lines of $L(n)$,   we get $2n/2=n$ roots.

Consider other roots on $L(n)'$.
Then we can write $z=u\zeta$ with $\zeta^n=-1$ and $u\in \mathbb R$  and 
\[
- |u|^{2m}u^{2m}+u^{2m}(1-a^{2m}+\eps)+a^{2m}\eps=0.
\]
This has no zeros near the origin,  as we have assumed $0<a<1$ to have 2n zeros in $L_{n,m}=0$.
Thus the above bifurcation equation has no real root. Thus altogether
we conclude $\rho(L_{n,m}^\eps)= 3n=3(n_1-m)$.
\end{proof}

\subsection{Application}

\subsubsection{$L^{hs}(n+m;n,m)$}
The space of harmonically splitting Lens type polynomials
apparently  can take bigger number of zeros than generalized lens polynomials.
To show this, we start from arbitrary lens equation
\[\begin{split}
\ell_n(z):=&\bar z-\frac{p(z)}{q(z)},\quad \deg_zq=n,\, \deg_zp\le n,\\
\end{split}
\] Put $k=\rho(\ell_n)$.
We assume that $0$ is not a root of $\ell_n$ for simplicity and $q(z)$ has coefficient 1 for $z^n$.
We consider its small purturbation in $L^{hs}(n+m;n,m)$:
\[
\phi_t(z):=-t \bar z^m+\ell_n(z)=-t \bar z^m+\bar z-\frac{p(z)}{q(z)},\,\,1\gg t>0.
\]
We assert
\begin{Theorem}
For sufficiently small $t>0$, $\rho(\phi_t)=k+m-1$.
\end{Theorem}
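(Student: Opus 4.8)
The plan is to treat $\phi_t$ as a perturbation of $\ell_n$ by the small anti-holomorphic term $-t\bar z^m$ and to count zeros by a combination of: (i) persistence of the $k$ simple zeros of $\ell_n$, and (ii) a rescaling analysis near $z=\infty$ (equivalently, near the poles of $p/q$ at the roots of $q$, or near $z=0$ after an inversion) that produces the extra $m-1$ zeros. First I would note that for $t=0$ the $k$ roots of $\ell_n$ are all simple (we assumed only simple solutions), so by the implicit function theorem applied to $\Re\phi_t=\Im\phi_t=0$ — the Jacobian being nonzero at each such root — each persists to a unique nearby simple zero of $\phi_t$ for all small $t>0$, and no two of these collide. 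So $\rho(\phi_t)\ge k$ from the ``old'' zeros; the work is to show exactly $m-1$ more appear and that these, together with the $k$, exhaust $V(\phi_t)$.

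Next I would locate the new zeros. Clearing denominators, $\phi_t=0$ is equivalent to the mixed polynomial
\[
\Phi_t(z,\bar z):=-t\,\bar z^m q(z)+\bar z\,q(z)-p(z)=0,
\]
which lies in $L^{hs}(n+m;n,m)$ and has holomorphic degree $n$ (from $\bar z q$ or $p$) but now anti-holomorphic degree $m$. For small $t$, the zeros split into those staying in a fixed compact set (the $k$ persistent ones above, since on a large fixed disk avoiding $V(\ell_n)$ one has $|\ell_n(z)|$ bounded below while $|t\bar z^m|\to0$) and those escaping to infinity as $t\to0$. For the escaping family I would use the scaling $z=w\,t^{-1/(m-1)}$ (this is the natural balance making the two dominant terms $-t\bar z^m q(z)$ and the top term of $\bar z q(z)-p(z)$, both of size $t^{-n/(m-1)}\cdot t^{\pm\cdots}$, comparable — the exact exponent is whatever makes $t\cdot |z|^m$ balance $|z|$ times the leading behavior, i.e. $t|z|^{m-1}\sim 1$). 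Substituting $q(z)=z^n+\text{lower}$ and dividing by the appropriate power of $t$, the limiting equation as $t\to0$ becomes, up to a nonzero constant, $\bar w^m=\bar w$, i.e. $\bar w^{m-1}=1$ (after discarding the spurious solution $w=0$, which corresponds to the bounded zeros and is handled separately), giving exactly $m-1$ simple roots $w$ on the unit circle. A Rouché / degree argument in the $w$-coordinate on a fixed annulus then shows $\Phi_t$ has exactly $m-1$ simple zeros in the escaping region for small $t$, and these are distinct from the $k$ bounded ones.

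Finally I would assemble the count: $\rho(\phi_t)=k+(m-1)$, and simplicity of all zeros for generic small $t$ follows because both families are nondegenerate (nonzero Jacobian in the respective coordinates). The main obstacle I anticipate is making the escaping-zeros analysis rigorous and exhaustive: one must verify that the chosen scaling exponent $1/(m-1)$ is exactly right, that the limiting equation is genuinely $\bar w^{m-1}=1$ with no contribution from the lower-order terms of $q$ and $p$, that all $m-1$ limiting roots are simple and lift to simple zeros of $\Phi_t$ (an application of the mixed-polynomial implicit function theorem, or a Rouché estimate on $|{\cdot}|^2$), and — crucially — that there are no other zeros escaping at a different rate or hiding in an intermediate regime. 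This last exhaustiveness point is where I would be most careful: I would argue that for $r_1<|z|<r_2/t^{1/(m-1)}$ with $r_1,r_2$ fixed, the term $\bar z\,q(z)$ dominates both $t\bar z^m q(z)$ and $p(z)$, so $\Phi_t$ has no zeros there, confining all zeros to either the fixed disk $|z|\le r_1$ or the annulus $|z|\sim t^{-1/(m-1)}$, which the two analyses above then handle completely.
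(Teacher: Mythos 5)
Your proposal is correct in substance and follows the same global decomposition as the paper --- the $k$ simple zeros of $\ell_n$ persist under the perturbation $-t\bar z^m$, and exactly $m-1$ new zeros appear near infinity --- but you certify the count at infinity by a genuinely different technique. The paper inverts the coordinate, $u=1/z$, writes the numerator as $\Phi_t=(-t+\bar u^{m-1})\tilde q(u)-\bar u^m\tilde p(u)$, and applies the curve selection lemma to the real algebraic set $\{\Phi_t(u)=0\}$: comparing lowest orders along a branch $t=s^a$, $u(s)=d_ps^p+\cdots$ forces $a=p(m-1)$ and $\overline{d}_p^{\,m-1}=1$, and an inductive, coefficient-by-coefficient argument shows each of the $m-1$ admissible leading terms carries a unique branch. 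You instead rescale $z=wt^{-1/(m-1)}$, obtain the limiting equation $\bar w^{m-1}=1$ (the same asymptotics, since in the paper $u\approx d_p\,t^{1/(m-1)}$ with $d_p$ an $(m-1)$-st root of unity), and invoke an implicit-function-theorem or Rouch\'e/degree argument at each nondegenerate limiting root, supplemented by a domination estimate on the intermediate region $r_1<|z|<r_2\,t^{-1/(m-1)}$ (which indeed works, via $|1-t\bar z^{m-1}|\ge 1-t|z|^{m-1}$, provided $r_2<1$) to exclude zeros escaping at any other rate. What each route buys: the curve-selection argument classifies all half-branches of zeros through $(u,t)=(0,0)$ at once, so exhaustiveness is automatic, at the price of the somewhat delicate uniqueness-of-branch induction; your route is more elementary and makes the location of the new zeros explicit, but the burden shifts to exactly the points you flag --- in particular, for exactness you should use the implicit function theorem (or note that the limiting zeros are anti-holomorphic, hence all of local degree $-1$), since classical Rouch\'e for mixed maps only controls the signed count, and you should check that the correction term $p(z)/(\bar z q(z))$ is real-analytic in $\tau=t^{1/(m-1)}$ near each limiting root so the limit is legitimate. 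You also omit the paper's comparison of signed counts $\beta(\phi_t)=n-m$ versus $\beta(\ell_n)=n-1$; this is harmless, since that step only predicts at least $m-1$ new negative zeros, and in both arguments the exact count comes from the analysis at infinity.
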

\begin{proof}
As before, we identify $\phi_t, \ell_n$ with their numerators.
For sufficiently small $t$ and for each zero root $\al$ of $\ell_n$, there exists a zero $\al'$ of $\phi_t$ in a neighborhood of  $\al$
which has the same orientation as $\al$.
For $t\ne 0$, we know that $\be(\phi_t)=n-m$ and $\be(\ell_n)=n-1$.
Here $\be(f)$ is the number of zeros of $f$  with sign. See Theorem \ref{main}.
By the assumption, $\ell_n$ has $k$ zeros, say $\al_1,\dots, \al_{k}$ and $\beta(\ell_n)=n-1$.
First we choose a positive number $R$ so that 
$1/R<|\al_j| <R$ for $j=1,\dots, 5n-5$. Thus it is clear that $\phi_t$ has $k$ zeros near each $\al_j(\eps)$ with 
the same sign as $\al_j$ in the original equation  $\ell_n=0$.
 On the other hand,
$\be(\phi_t)=n-m,\,t\ne 0$.
Thus $\phi_\eps$ has at least $m-1$ new negative zeros.

We assert that $\phi_t$ obtains exactly $m-1$ new negative  zeros near infinity.
To see this near infinity, we change the coordinate $u=1/z$ and 
consider the numerator:
$((-t/\bar u^m-1/\bar u)q(1/u)-p(1/u))\,\bar u^m u^n$. This takes the form
\[
\Phi_t=(-t+\bar u^{m-1})\, \tilde q(u)-\bar u^m\tilde p(u)
\]
where $\tilde q,\tilde p$ are polynomials defined as 
$\tilde q(u)=u^nq(1/u),\, \tilde p(u)=u^np(1/u)$.
By asumption  we can write 
\[\begin{split}
\tilde q(u)&=1+\sum_{i=1}^n b_iu^i\\
\tilde p(u)&= \sum_{i=0}^n c_iu^i.
\end{split}
\]
We will prove that  for a sufficiently small $t>0$, there exist exactly $m-1$ zeros  $u(t)$ which 
converges to 0 as $t\to 0$. 
The zero set  
\[
\{(u,t)\in \mathbb C\times \mathbb R\,|\, \Phi_t(u)=0\}
\]
 in $\mathbb C\times \mathbb R$
is a real algebraic set. Thus we  need only check the components which intersect
with $t=0$.
We use the Curve selection lemma.
Suppose that
\begin{eqnarray}
\Phi_{t(s)}(u(s))&\equiv& 0,\label{identity},\,\,
t(s)=s^a,\quad\\
 u(s)&=&\sum_{j=p}^\infty d_js^j,\,\, d_p\ne 0.
\end{eqnarray}
Note that the possible lowest order of $(-t(s)+\bar u(s)^{m-1})\tilde q(u(s))$
is $\min(a,p(m-1))$, while the lowest order of
the second term $\bar u(s)^m \tilde p(u(s))$ is  $pm$.
Thus (\ref{identity}) says 
\[a=p(m-1),\quad -1+{\overline{d}_p}^{m-1}=0.
\]
Thus we can write
\begin{eqnarray}\label{first term}
d_p=\exp(2\pi ji/(m-1)),\quad \exists j,\,0\le j\le m-2.
\end{eqnarray}
We assert that
\begin{Assertion}
 For a fixed $j$, there exist a unique $u(s)$ which satisfies (\ref{identity})
and (\ref{first term}).
\end{Assertion}
We prove the coefficients $d_j$ of $u(s)$ are uniquely determined by induction.
Put
\begin{eqnarray*}
(-t(s)+\bar u^{m-1}(s)\tilde q(u(s))&=&\sum_{\nu=p(m-1)}^\infty \ga_\nu s^\nu\\
\bar u(s)^m\tilde p(u(s))&=&\sum_{\nu=pm}^\infty \de_\nu s^\nu.
\end{eqnarray*}
We have shown $\ga_{p(m-1)}=0$ as $d_p=\exp(2\pi ji/(m-1))$.
Suppose that $d_j,\,p\le j\le \mu-1$  are uniquely determined.
We consider the coefficient of $s^{p(m-2)+\mu}$ in  (\ref{identity}).
We need to have 
\[
\ga_{p(m-2)+\mu}=\de_{p(m-2)+\mu}.
\]
Observe that 
\[
\ga_{p(m-2)+\mu}=(m-1)\overline{d}_p^{m-1}\overline{d}_\mu+r'
\]
 where $r'$ is a polynomial of coefficients
$\{\overline{d}_j,\,j\le \mu-1\}\cup\{b_j,\, j=1,\dots,n\}$. On the other hand,
$\de_{p(m-2)+\mu}$ is a polynomial of coefficients $\{\overline{d}_j,\,j\le \mu-1\}\cup\{c_j,\, j=0,\dots,n\}$.
Thus $d_\mu$ is uniquely determined by the equality
$\ga_{p(m-2)+\mu}=\de_{p(m-2)+\mu} $.
\end{proof} 
As $\Phi_t\in L^{hs}(n+m;n,m)$,  combining with Theorem \ref{Lens-range}, we obtain the following.
\begin{Corollary} \label{hs-rho} The set of the number of zeros $\rho(f)$ of  harmonically splitting lens type polynomials 
$f\in  L^{hs}(n+m;n,m)$ includes   $ \{n+m-2,n+m,\cdots, 5n+m-6\}$.
\end{Corollary}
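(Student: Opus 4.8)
The plan is to combine the range result for ordinary lens equations (Theorem \ref{Lens-range}) with the perturbation theorem $\rho(\phi_t)=k+m-1$ that was just proved. Given any target value $N$ in the asserted set $\{n+m-2,\,n+m,\,\dots,\,5n+m-6\}$, write $N=k+m-1$ with $k=N-m+1$; then $k$ ranges over $\{n-1,\,n+1,\,\dots,\,5n-5\}$ as $N$ ranges over the claimed set. By Theorem \ref{Lens-range}, for each such $k$ there is an ordinary lens equation $\ell_n(z)=\bar z-p(z)/q(z)$ with $\deg_z q=n$, $\deg_z p\le n$, having exactly $k$ simple zeros (and, since Theorem \ref{Lens-range} is about the lens equation in the form $\bar z-p/q$, one may after a small perturbation arrange that all zeros are simple and that $0$ is not a zero and that $q$ is monic; these normalizations do not change $\rho$).

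Then I would apply the Theorem just proved: for sufficiently small $t>0$, the harmonically splitting perturbation $\phi_t(z)=-t\bar z^m+\ell_n(z)$ lies in $L^{hs}(n+m;n,m)$ and satisfies $\rho(\phi_t)=k+m-1=N$. Since $N$ was an arbitrary element of $\{n+m-2,\,n+m,\,\dots,\,5n+m-6\}$, this shows the set of possible values of $\rho(f)$ for $f\in L^{hs}(n+m;n,m)$ contains this arithmetic progression, which is exactly the statement of the Corollary.

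The only real point requiring care — and the step I would flag as the main (minor) obstacle — is checking that the normalization hypotheses needed to invoke the Theorem can genuinely be met simultaneously with realizing the prescribed value $k$: namely that among the lens equations achieving $k$ zeros one can pick one whose zeros are all simple, whose constant term is nonzero (so $0$ is not a root), and whose denominator is monic of degree exactly $n$. The monic/degree condition is harmless (rescale), and genericity of simplicity and of $0$ not being a root follows because these are open dense conditions that do not decrease $\rho$ near a configuration of simple zeros; so the obstacle is really only bookkeeping. I would therefore present this as a short deduction: fix $N$, solve $N=k+m-1$, invoke Theorem \ref{Lens-range} to get the seed equation, normalize it, and invoke the preceding Theorem to conclude $\rho(\phi_t)=N$.
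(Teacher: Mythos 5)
Your proposal is correct and follows essentially the same route as the paper: the Corollary is obtained exactly by combining Theorem \ref{Lens-range} (which supplies lens equations $\ell_n$ with $k\in\{n-1,n+1,\dots,5n-5\}$ simple zeros) with the preceding Theorem giving $\rho(\phi_t)=k+m-1$ for the perturbation $\phi_t=-t\bar z^m+\ell_n$. Your extra care about the normalizations (simple zeros, $0$ not a root, $q$ monic) is a reasonable elaboration of hypotheses the paper treats as harmless, but it does not change the argument.
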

\subsubsection{ The moduli space $M(n+m;n,m)$}
Now we consider the bigger  class of polynomials $M(n+m;n,m)\supset L^{hs}(n+m;n,m)$. 
As $\be(F)=n-m$ for $F\in M(n+m;n,m)$, the lowest possible number of zeros of a polynomial in $M(n+m;n,m)$ is $n-m$.
In fact we assert
\begin{Corollary} The set $\{\rho(f)\,|\, f\in M(n+m;n,m)\}$ includes 
$\{n-m,n-m+2,\dots, n+m-2,\dots, 5n+m-6\}$.
\end{Corollary}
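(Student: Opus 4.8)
The plan is to combine Corollary~\ref{hs-rho} with an explicit family of examples that realises the remaining small values. Since $L^{hs}(n+m;n,m)\subset M(n+m;n,m)$, Corollary~\ref{hs-rho} already gives
\[
\{n+m-2,\,n+m,\,\dots,\,5n+m-6\}\ \subset\ \{\rho(f)\mid f\in M(n+m;n,m)\},
\]
so I would only have to realise, for each $\mu=1,\dots,m$, a mixed polynomial in $M(n+m;n,m)$ with exactly $n+m-2\mu$ simple zeros: as $\mu$ ranges over $\{1,\dots,m\}$ the integer $n+m-2\mu$ runs through $\{n-m,\,n-m+2,\,\dots,\,n+m-2\}$, with $\mu=1$ reproducing the bottom value $n+m-2$ of the previous range and $\mu=m$ giving the absolute minimum $\be=n-m$.

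First I would take $g_\mu(z,\bar z):=\bar z^{\,\mu}z^{\,n}-1\in M(n+\mu;n,\mu)$. Writing $\bar z^{\,\mu}z^{\,n}=z^{\,n-\mu}|z|^{2\mu}$, the equation $g_\mu=0$ forces $|z|=1$ and $z^{\,n-\mu}=1$, so $V(g_\mu)$ is the set of $(n-\mu)$-th roots of unity, while the Jacobian $J(\Re g_\mu,\Im g_\mu)=|\partial g_\mu/\partial z|^2-|\partial g_\mu/\partial\bar z|^2=(n^2-\mu^2)|z|^{2(n+\mu-1)}$ is positive on $|z|=1$; hence $g_\mu$ has exactly $n-\mu$ zeros, all simple and orientation preserving. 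Next I would raise the anti-holomorphic degree to $m$ by the perturbation
\[
\phi_t^{(\mu)}(z,\bar z):=-t\,\bar z^{\,m}z^{\,n}+g_\mu(z,\bar z)=\bigl(\bar z^{\,\mu}-t\,\bar z^{\,m}\bigr)z^{\,n}-1\ \in\ M(n+m;n,m),\qquad t>0 ,
\]
whose top degree part $-t\,\bar z^{\,m}z^{\,n}$ carries no factor $z+\ga_j\bar z$, so that $\phi_t^{(\mu)}$ is admissible at infinity with $\be(\phi_t^{(\mu)})=n-m$; in particular its zero set is bounded and finite for each $t>0$.

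The heart of the argument would be to show that $\rho(\phi_t^{(\mu)})=(n-\mu)+(m-\mu)=n+m-2\mu$ for all small $t>0$. On every disc $|z|\le R$ containing the unit circle, $\phi_t^{(\mu)}\to g_\mu$ uniformly as $t\to0$, so for small $t$ there is exactly one zero near each of the $n-\mu$ simple zeros of $g_\mu$, still simple and positive, and no others inside $|z|\le R$. The remaining zeros escape to infinity: in the chart $u=1/z$ one has $u^{\,n}\bar u^{\,m}\,\phi_t^{(\mu)}=-t+\bar u^{\,m-\mu}-u^{\,n}\bar u^{\,m}$, whose lowest order part near $u=0$ is $-t+\bar u^{\,m-\mu}$. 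I would then run the Curve Selection Lemma argument exactly as in the proof of Corollary~\ref{hs-rho}: the leading coefficient $d_p$ of a branch $u(s)$ must satisfy $\bar d_p^{\,m-\mu}=1$, giving $m-\mu$ choices, after which all higher coefficients are uniquely determined by induction; this produces precisely $m-\mu$ branches of zeros tending to $u=0$ as $t\to0$. Comparing the argument of $-t+\bar u^{\,m-\mu}-u^{\,n}\bar u^{\,m}$ with that of $-t+\bar u^{\,m-\mu}$ on the circle $|u|=2\,t^{1/(m-\mu)}$ would show that these $m-\mu$ new zeros are simple (and negative, consistent with $\be=(n-\mu)-(m-\mu)$) and that there are no further zeros near infinity. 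Hence $\rho(\phi_t^{(\mu)})=n+m-2\mu$, and letting $\mu$ run over $1,\dots,m$ and combining with Corollary~\ref{hs-rho} finishes the proof. (When $n=m$ one replaces $g_m$ by the fixed-point-free $\bar z^{\,m}z^{\,m}+1$, which realises $n-m=0$; nothing else changes.)

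The hard part will be this last step — verifying that the perturbation contributes exactly $m-\mu$ new zeros at infinity and no stray zeros elsewhere. I do not expect a genuinely new difficulty here: the Curve Selection Lemma computation is a verbatim specialisation of the one already carried out for $L^{hs}(n+m;n,m)$, now with the transparent "numerator at infinity" $-t+\bar u^{\,m-\mu}-u^{\,n}\bar u^{\,m}$ in place of $(-t+\bar u^{m-1})\tilde q(u)-\bar u^{m}\tilde p(u)$; and the fact that the intermediate annulus $1\ll|z|\ll t^{-1/(n+m)}$ contains no zeros follows from admissibility at infinity (which confines the whole zero set to a fixed disc whose radius depends continuously on the coefficients) together with an elementary argument-principle estimate comparing $-t+\bar u^{\,m-\mu}$ with the full expression.
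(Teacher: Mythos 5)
Your proposal is correct in substance, but it takes a genuinely different and much heavier route than the paper. The paper disposes of the missing low values $n-m,\,n-m+2,\dots,\,n+m-4$ with a one-line construction: the split products $f_a(z)=(z^{n-a}\bar z^{m-a}-1)(z^a-2)(\bar z^a-3)$, $0\le a\le m-2$, whose zero set is visibly the disjoint union of the $n-m$ solutions of $z^{n-m}=1$ on $|z|=1$, the $a$ solutions of $z^a=2$ and the $a$ solutions of $\bar z^a=3$, all simple because at a zero of one factor the remaining factors are nonvanishing and only rescale the Jacobian by a positive quantity; no perturbation or limit argument is needed. You instead re-run the degeneration-at-infinity analysis from the theorem preceding Corollary \ref{hs-rho} on the family $\phi_t^{(\mu)}=(\bar z^{\mu}-t\bar z^{m})z^{n}-1$, which does work and even has two side benefits: your examples lie in $L^{hs}(n+m;n,m)$ (take $r(\bar z)=\bar z^{\mu}-t\bar z^{m}$, $q(z)=z^{n}$, $p(z)=1$), so they slightly strengthen Corollary \ref{hs-rho}, and the zeros are explicitly located near the unit circle and near infinity. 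The price is that you must control three regions, and your stated justification for the intermediate region is not right: admissibility at infinity does \emph{not} confine the zero set of $\phi_t^{(\mu)}$ to a disc whose radius stays bounded as $t\to 0$ (your own new zeros sit at $|z|\sim t^{-1/(m-\mu)}$). The clean way to exclude stray zeros, exactly as in the paper's proof for $L^{hs}$, is: bounded families of zeros converge to simple zeros of $g_\mu$, giving exactly $n-\mu$ of them; unbounded families are semialgebraic branches through $(u,t)=(0,0)$, and your curve-selection computation bounds these by $m-\mu$; the signed count $\be(\phi_t^{(\mu)})=n-m$ then forces at least, hence exactly, $m-\mu$ negative zeros near infinity. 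With that repair (and your separate treatment of $\mu=m$ and of $n=m$) the argument is complete, but the paper's explicit products achieve the same conclusion with essentially no work.
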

\begin{proof}
By Corollary \ref{hs-rho}, it is enough to show that any of  $\{n-m,n-m+2,\dots, n+m-4\}$ can be $\rho$ of some $f\in M(n+m;n,m)$.
Let $j=n-m+2a,\, 0\le a\le m-2$.
Consider the polynomial
\[
f_a(z)=(z^{n-a}\bar z^{m-a}-1)(z^a-2)(\bar z^a-3)
\]
Then we see that $\rho(f_a)=n-m+2a$ and $f_a\in M(n+m;n,m)$.
\end{proof}
\begin{Example}Consider 
$M(5;3,2)$. The possible $\rho$ are $\{1,3,\dots, 11\}$.
For $\rho=1, 3, 5$, we can take for example mixed polynomials associated with 
thefollowing  polynomials
\[ f(z)=z^3\bar z^2-1,\, (z^2\bar z-1)(z-2)(\bar z-3),\, (z-1)(z^2-2)(\bar z^2-3).
\]
The higher values $\{7,9,11\}$ are given by 
\[
\eps \bar z^2+\bar z-\frac{p(z)}{q(z)},\,\deg\,p(z)\le 3,\,\deg\, q(z)=3,\,\eps\ll 1
\]
where $\bar z-\frac{p(z)}{q(z)}=0$ is a lens type equation with $\rho=6,8,10$.
\end{Example}
\subsection{Further remark.}
\subsubsection{$L(n+2m;n+m,m)$ with $2m<n$} We observe that in Theorem \ref{main1}, 
$\rho(\ell_{n,m}^\eps)=5(n_1-m)$ with $n_1=n+m$
which is exactly the optimal upper bound for $m=1$. Thus we may expect that the number $5(n_1-m)$
might be optimal upper bound for the polynomials in $L(n_1+m;n_1,m)$. However in the proof for $m=1$,  a
result
about an attracting or rationally neutral fixed points in  complex dynamics played an important role
 and the the argument there does not apply directly in our case.

\subsubsection{$L(2m+m;2m,m)$} Our poynomial $\ell_{2m,m}^\eps$ is not good enough. We have seen  in Corollary \ref{case-n=2m} that 
the mixed polynomial $f_{2m}$ has $5m$ zeros, while our polynomial $\ell_{m,m}^\eps$ has only 
$3m$ zeros.
\begin{Problem}\begin{itemize}
\item Determine the upper bound of $\rho$ for $L(n+m;n,m)$ for $n>3m$.
\item Determine the posssible number of $\rho$ for $L(n+m;n,m)$.
Is it $\{n-m+2k\,|\, 0\le k\le 2n-2m\}$?
\item
Determine the upper bound  of $\rho$ for 
 $L^{hs}(n+m;n,m)$ or 
$M(n+m;n,m)$.
\item Are the subspaces of the moduli $L(n+m;n,m),\,L^{hs}(n+m;n,m),\, M(n+m;n,m)$ with a fixed $\rho$ connected?
If not, give an example.
\end{itemize}
\end{Problem}
\def\cprime{$'$} \def\cprime{$'$} \def\cprime{$'$} \def\cprime{$'$}
  \def\cprime{$'$} \def\cprime{$'$} \def\cprime{$'$} \def\cprime{$'$}

\end{document}